\newcommand{\ee}{\mathbb E}
\newcommand{\rr}{\mathbb R}
\newcommand{\zz}{\mathbb Z}
\newcommand{\nn}{\mathbb N}
\DeclareMathOperator*{\esssup}{ess\,sup}
\newcommand{\Rmnum}[1]{\expandafter\@slowromancap\romannumeral #1@}
\renewcommand{\section}{\@startsection
{section}%                   % the name
{1}%                         % the level
{0mm}%                       % the indent
{-\baselineskip}%            % the before skip
{0.5\baselineskip}%          % the after skip
{\normalfont\large\bfseries}} % the style
\theoremstyle{plain}
\newtheorem{theorem}{Theorem}[section]
\newtheorem{proposition}[theorem]{Proposition}
\newtheorem{corollary}[theorem]{Corollary}
\newtheorem{lemma}[theorem]{Lemma}
\theoremstyle{definition}
\numberwithin{equation}{section}
\begin{document}

\begin{frontmatter}

\date{}

\title{On the
integrated squared error of the linear wavelet density estimator}

\author{Lu Lu\corref{cor}}
\ead{lu.lu@uconn.edu}

\cortext[cor]{Principal Corresponding Author}
\address{Department of Mathematics, University of Connecticut, Storrs, CT, USA, 06269}

\begin{abstract}
Linear wavelet density estimators are wavelet projections of the
empirical measure based on independent, identically distributed
observations. We study here the law of the iterated logarithm (LIL)
and a Berry-Esseen type theorem. These results are proved under
different assumptions on the density $f$ that are different from
those needed for similar results in the case of convolution kernels
(KDE): whereas the smoothness requirements are much less stringent
than for the KDE, Riemann integrability assumptions are needed in
order to compute the asymptotic variance, which gives the scaling
constant in LIL. To study the Berry-Esseen type theorem, a rate of
convergence result in the martingale CLT is used.
\end{abstract}

\begin{keyword}
linear wavelet density estimation \sep law of the iterated logarithm
\sep integrated squared error \sep Berry-Esseen type theorem\\
\MSC 62G07, 60F05, 60F15
\end{keyword}

\end{frontmatter}

\section{Introduction}

Let $X,X_1,X_2,...$be i.i.d random variables in $\mathbb{R}$ with
common Lebesgue density $f$. Let $\phi\in L_2(\rr)$ be a scaling
function and $\psi$ the corresponding wavelet function. Let
$\phi_{0k}:=\phi(x-k)$ and $\psi_{jk}:=2^{j/2}\psi(2^jx-k)$.
$\{\phi_{0k},\psi_{jk}\}$ forms an orthonormal system in $L_2(\rr)$.
Every $f\in L_p(\rr)$ has a formal expansion
\begin{equation}
f(x)=\sum_k
\alpha_{0k}\phi_{0k}(x)+\sum_{j=0}^\infty\sum_k\beta_{jk}\psi_{jk}(x).
\end{equation}
The linear wavelet density estimator is defined as
\begin{equation}        \label{eq: f_n}
\hat{f}_{n}(x)=\sum_k
\hat{\alpha}_{0k}\phi_{0k}(x)+\sum_{j=0}^{j_n-1}\sum_k\hat{\beta}_{jk}\psi_{jk}(x),
\end{equation}
where $j_n$ is a sequence of integers. $\hat{\alpha}_{jk}$ and
$\hat{\beta}_{jk}$ are constructed by the plug-in method. Let
$P_n=\frac{1}{n}\sum_{i=1}^n\delta_{X_i}$ be the empirical measure
corresponding to the sample $\{X_i\}_{i=1}^n, n\in\nn$. Then
\begin{equation}
\hat{\alpha}_{jk}=P_n(\phi_{jk})=\frac{1}{n}\sum_{i=1}^n2^{j/2}\phi(2^jX_i-k),
\end{equation}
\begin{equation}
\hat{\beta}_{jk}=P_n(\psi_{jk})=\frac{1}{n}\sum_{i=1}^n2^{j/2}\psi(2^jX_i-k).
\end{equation}
They are unbiased estimators of $\alpha$ and $\beta$.

The use of this estimator first appeared in Doukhan and Le\'{o}n
(1990) and Kerkyacharian and Picard (1992). When $\phi$ satisfies
certain properties, i.e., bounded and compactly supported, one may
write $\hat{f}_{n}(x)$ in a form similar to that of the classical
kernel density estimator:
\begin{equation}
f_{n,K}(x):=\hat{f}_{n}(x)=\frac{2^{j_n}}{n}\sum_{i=1}^n
K(2^{j_n}x,2^{j_n}X_i),
\end{equation}
where the projection kernel $K(x,y)$ is given by
\begin{equation} \label{eq: K} K(x,y)=\sum_{k\in
\zz}\phi(x-k)\phi(y-k).
\end{equation}
$\{2^{-j_n}\}$ is playing the role of the bandwidth in the classical
kernel density estimation, and the sum is finite for each $x$ and
$y$. By Lemma 8.6, H\"{a}rdle, Kerkyacharian, Picard and Tsybakov
(HKPT, 1998), $K(x,y)$ is majorized by a convolution kernel
$\Phi(x-y)$ such that
\begin{equation} \label{eq: HKPT}
|K(x,y)|\leq\Phi(x-y),
\end{equation}
where $\Phi:\rr\rightarrow\rr^+$ is a bounded, compactly supported
and symmetric function.

A widely accepted measure of performance of an estimator is its mean
integrated squared error, which is the expected value of the
integrated squared error or $L_2$ error defined by  $I_n:=\int
(f_{n}(x)-f(x))^2dx$ (see, e.g., Bowman 1985). The integrated
squared error $I_n$ constitutes in itself a nice global measure of
approximation of the density. And it is of interest to obtain the
asymptotically exact almost sure rate of approximation, in this
measure, of the density by an estimator of interest, often a law of
the iterated logarithm. This was done by Gin\'{e} and Mason (2004)
for kernel density estimators, and it is done here for wavelet
density estimators. We will refer to several results by Gin\'{e} and
Mason (2004), which will be abbreviated as (GM) in what follows.
This type of theorems may be thought of as companion results to
central limit theorems: whereas the latter gives rate of
approximation in probability, the former deals with a.s. rate of
convergence. The central limit theorem for the integrated squared
error $I_n$ was obtained by Hall (1984) for kernel density
estimators, and by Zhang and Zheng (1999) for wavelet density
estimators. We also prove a Berry-Esseen type theorem as a
complement to Zhang and Zheng's result. Doukhan and Le\'{o}n (1993)
obtained a bound on the rate of convergence in the CLT for
generalized density projection estimates with respect to Prohorov's
metric. However, their bound does not apply to the optimal window
width.

To study the integrated square error for the wavelet density estimator, we shall impose the following conditions:\\
\smallskip
\noindent $(f)$: $f(x)$ is bounded.\\
(S1): The scaling function $\phi$ is bounded and compactly
supported (e.g., Daubechies wavelet).\\
\smallskip
Then, in $\eqref{eq: HKPT}$, we can assume $\Phi$ is supported on
$[-A,A]$ for some $A>0$. Set $\theta_\phi(x)=\sum_k|\phi(x-k)|$.
(S1) also guarantees that (see section 8.5, HKPT, 1998),
\begin{equation} \label{eq: phi}
\esssup_x\theta_\phi(x)<\infty.
\end{equation}
\smallskip
\noindent (S2): $\|\phi\|_v<\infty$, where $\|\cdot\|_v$ denotes the
total
variation norm of $\phi$.\\
The bandwidth $\{2^{-j_n}\}$ satisfies\\
%(1.3)
\smallskip
\noindent(B1): $j_n\rightarrow \infty,\ 2^{-j_n}\asymp n^{-\delta} \
\ \ \rm{for \ some}\ \delta\in(0,1/3),$ where $a_n\asymp b_n$ means
$0<\liminf a_n/b_n<\limsup
a_n/b_n<\infty$. \\
\smallskip
\noindent(B2): There exists an increasing sequence of positive
constants $\{\lambda_k\}_{k\geq1}$ satisfying
\begin{equation}
\lambda_{k+1}/\lambda_k\to 1,\ \log\log\lambda_k/\log k\to 1, \
\lambda_{k+1}-\lambda_k\to\infty
\end{equation}
as $k\to \infty$, such that $2^{-j_n}$ is constant for
$n\in[\lambda_k,\lambda_{k+1}), \ k\in\nn$. For instance, the
sequence $\lambda_k=\exp(k/\log(e+k))$ satisfies these conditions.

We will prove the following theorems for the statistic
\begin{equation}
J_n:=\|f_{n,K}-f\|_2^2-\ee\|f_{n,K}-f\|_2^2.
\end{equation}
\begin{theorem}  \label{Theorem 5.1}
Let $f, \phi$ and ${j_n}$ satisfy hypotheses (f), (S1), (S2), (B1)
and (B2). Set $\sigma^2:=2\int_\rr f^2(x)dx$. Then,
\begin{equation}        \label{eq: Jn}
\limsup_{n\to\infty}\pm\frac{n2^{-j_n/2}}{\sigma\sqrt{2\log\log
n}}J_n=1, \ \ \ \ a.s.
\end{equation}
\end{theorem}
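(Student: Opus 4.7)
The strategy is to peel off a canonical $U$-statistic as the stochastic part of $J_n$, identify its asymptotic variance via the reproducing property of $K$ together with a Riemann-averaging argument, and then invoke a LIL for degenerate $U$-statistics after using (B2) to freeze the kernel on long windows. Write $K_n(x,y):=2^{j_n}K(2^{j_n}x,2^{j_n}y)$ and $\bar f_n(x):=\ee K_n(x,X)=(P_{V_{j_n}}f)(x)$, where $V_{j_n}:=\overline{\mathrm{span}}\{\phi_{j_n,k}:k\in\zz\}$, so $f_{n,K}(x)=n^{-1}\sum_i K_n(x,X_i)$. Expand
\[
\|f_{n,K}-f\|_2^2=\|f_{n,K}-\bar f_n\|_2^2+2\!\int(f_{n,K}-\bar f_n)(\bar f_n-f)\,dx+\|\bar f_n-f\|_2^2.
\]
The third summand is deterministic; the key simplification absent in the KDE setting of (GM) is that since $K_n(\cdot,y)\in V_{j_n}$ and $\bar f_n-f\perp V_{j_n}$, the function $y\mapsto\int K_n(x,y)(\bar f_n(x)-f(x))\,dx$ vanishes identically, so the cross term is also deterministic and the linear Hoeffding component of $J_n$ is exactly zero. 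Therefore
\[
J_n=\frac{1}{n^2}\sum_{i\ne j}G_n(X_i,X_j)+\frac{1}{n^2}\sum_{i=1}^n\bigl(G_n(X_i,X_i)-\ee G_n(X,X)\bigr)=:U_n+D_n,
\]
with the symmetric canonical kernel $G_n(y,z):=\int(K_n(x,y)-\bar f_n(x))(K_n(x,z)-\bar f_n(x))\,dx=2^{j_n}K(2^{j_n}y,2^{j_n}z)-\bar f_n(y)-\bar f_n(z)+\|\bar f_n\|_2^2$ (using the reproducing identity $\int K(\cdot,y)K(\cdot,z)\,du=K(y,z)$).

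\emph{Variance and remainders.} Changing variables $u=2^{j_n}y$, $v=2^{j_n}z$, the leading part of $\ee G_n(X_1,X_2)^2$ is $\int\!\!\int K(u,v)^2 f(2^{-j_n}u)f(2^{-j_n}v)\,du\,dv$. Compact support of $\phi$ forces $K(u,v)=0$ when $|u-v|>2A$, so this is asymptotic to $\int K(v,v)f(2^{-j_n}v)^2\,dv=2^{j_n}\!\int\theta_{\phi^2}(2^{j_n}w)f(w)^2\,dw$, where $\theta_{\phi^2}(t):=\sum_k\phi^2(t-k)$ is $1$-periodic with mean $\int\phi^2=1$. A Riemann-averaging lemma (requiring $f^2$ Riemann integrable, the provenance of the ``Riemann'' hypothesis in the abstract) then yields $\ee G_n(X_1,X_2)^2\sim 2^{j_n}\!\int f^2$, hence $\Var(n\,2^{-j_n/2}U_n)\to\sigma^2$. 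For the diagonal, $|G_n(y,y)|\le\|K_n(\cdot,y)-\bar f_n\|_2^2\lesssim 2^{j_n}$ by (S1) and \eqref{eq: phi}, so $\Var(D_n)\lesssim 2^{2j_n}/n^3$; multiplied by $n\,2^{-j_n/2}$ this has variance $O(n^{\delta-1})$, and by (B1) with $\delta<1/3$ together with Borel--Cantelli along the blocks of (B2), $n\,2^{-j_n/2}D_n\to 0$ a.s., in particular $o(\sqrt{\log\log n})$.

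\emph{LIL via blocking.} By (B2), $j_n$ is constant on each window $[\lambda_k,\lambda_{k+1})$, so on such a window $G_n\equiv G^{(k)}$ is a fixed canonical kernel and $nU_n=\tfrac{2}{n}\sum_{1\le i<j\le n}G^{(k)}(X_i,X_j)$ is a classical degenerate $U$-statistic in $n$. Apply a fixed-kernel LIL for canonical $U$-statistics (Dehling--Denker--Philipp / Arcones--Gin\'e): the moment/truncation hypothesis is verified through $\|G^{(k)}\|_\infty\lesssim 2^{j(k)}=O(n^\delta)$ with $\delta<1/3$, so that $\|G^{(k)}\|_\infty^2\log\log n/(n\,\Var(G^{(k)}(X_1,X_2)))=O(n^{2\delta-1}\log\log n)\to 0$. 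The growth properties $\lambda_{k+1}/\lambda_k\to 1$, $\log\log\lambda_k/\log k\to 1$, $\lambda_{k+1}-\lambda_k\to\infty$ in (B2) are exactly what is needed (cf.\ (GM)) to interpolate the LIL from the subsequence $\{\lambda_k\}$ to all $n$. Combined with the previous paragraph this yields $\limsup_{n\to\infty}\pm nU_n/(\sigma\,2^{j_n/2}\sqrt{2\log\log n})=1$ a.s., and together with the negligibility of $D_n$ gives \eqref{eq: Jn}. The main obstacle is precisely this last step: $G_n$ depends on $n$ and its sup-norm diverges, so one genuinely needs the engineering of (B1)--(B2) to collapse matters to a fixed-kernel LIL and to dominate the truncation tails, while the variance computation is where the non-convolution nature of the projection kernel is felt and where Riemann integrability enters to pin down $\sigma^2=2\int f^2$.
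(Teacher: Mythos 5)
Your setup is correct and closely parallels the paper: the vanishing of the linear (cross) term by orthogonality of the wavelet projection is exactly the paper's argument that $J_n=\bar J_n$, the decomposition into a degenerate $U$-statistic plus a diagonal term matches the paper's $W_n=U_n+L_n$, and your variance computation (using the reproducing identity $\int K(\cdot,y)K(\cdot,z)=K(y,z)$ and Riemann averaging of $\theta_{\phi^2}(2^{j_n}\cdot)f^2$) is a clean alternative to the paper's change-of-variables/Riemann-sum argument in Lemma~\ref{Lemma 2.4}, yielding the same constant.

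The gap is in the final LIL step. A fixed-kernel LIL for degenerate $U$-statistics (Dehling--Denker--Philipp, Arcones--Gin\'e) does not produce the Gaussian constant $\sigma=\sqrt{2\int f^2}$ with the $\sqrt{2\log\log n}$ normalization: for a fixed canonical kernel $G$, $\sum_{i<j}G(X_i,X_j)$ behaves in the limit like a fixed weighted chi-squared chaos $\tfrac12\sum_k\lambda_k(Z_k^2-1)$ whose tails are exponential, not Gaussian, and the LIL reads $\limsup \sum_{i<j}G(X_i,X_j)/(n\log\log n)=\lambda_{1}$ a.s.\ (largest eigenvalue of the associated integral operator), with normalizer $n\log\log n$, not $n\cdot\mathrm{sd}(G)\cdot\sqrt{2\log\log n}$. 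Freezing the kernel on blocks via (B2) therefore collapses the problem to a setting where the wrong rate and the wrong constant come out. The Gaussian constant $\sigma^2=2\int f^2$ only emerges because, as $j_n\to\infty$, the spectrum of the operator $\mathcal R_{n,F}$ flattens: $\lambda_{n,1}/\bigl(\sum_k\lambda_{n,k}^2\bigr)^{1/2}\to 0$, so the chi-squared chaos $\sum_k\lambda_{n,k}(Z_k^2-1)$ acquires Gaussian tails in the moderate deviation range $a_n\asymp\sqrt{\log\log n}$. This is precisely what the paper supplies and what your proposal omits: the KMT/DKW strong approximation of $W_n([-M,M])$ by a Gaussian chaos (Proposition~\ref{prop 4.2}), the eigenvalue limit computation \eqref{eq: modlim} via Lemmas~\ref{Lemma 2.4b}--\ref{Lemma 2.4}, and the Pinsky-type moderate deviation \eqref{eq: VnM}--\eqref{eq: 4.16} with the $b_n\to 0$ condition. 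The Bernstein-type condition you verify, $\|G^{(k)}\|_\infty^2\log\log n/(n\Var)\to 0$, is necessary but not sufficient here; without the spectral flattening and the accompanying moderate deviation estimate there is no route from the fixed-kernel degenerate $U$-statistic LIL to \eqref{eq: Jn}. A secondary, lesser point: controlling the diagonal $D_n$ by second moments and Chebyshev gives probability bounds $O(n_k^{\delta-1})$ along the block endpoints, which is summable, but one also needs a maximal inequality within blocks (or Bernstein, as in the paper's \eqref{eq: 3.5}) to handle $\max_{n\in I_k}|D_n|$.
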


\begin{theorem}  \label{Theorem BE}
Assume the hypotheses (f), (S1), (B1) and that there exists $L\geq0$
such that $f$ is H\"{o}lder continuous with exponent $0<\alpha\leq1$
on $[-L,L]$: f is monotonically increasing on $(-\infty,-L]$ and
monotonically decreasing on $[L,\infty)$. Let
$Z\sim\mathcal{N}(0,1)$. Then there exists a constant $C$ (depending
on $f$, $\phi$ and $\{j_n\}$), such that
\begin{equation}      \label{eq: BE thm}
\sup_t|\Pr\{n2^{-j_n/2}J_n\leq t\}-\Pr\{\sigma Z\leq t\}|\leq
C(n^{-3\delta/16}\vee n^{-\alpha\delta}\sqrt{\log n})
\end{equation}
where $\sigma^2=2\int_\rr f^2(x)dx$.
\end{theorem}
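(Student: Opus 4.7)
The plan is to decompose $J_n$ into a dominant degenerate U-statistic plus a negligible diagonal remainder, observe that the bias cross-term vanishes identically, and then apply a martingale Berry-Esseen inequality. With $K_{j_n}(x,y) := 2^{j_n}K(2^{j_n}x, 2^{j_n}y)$, $k_{j_n}(x) := \ee K_{j_n}(x, X_1)$, and $g_n := \ee f_{n,K} - f = E_{j_n}f - f$, the identity
\begin{equation*}
\|f_{n,K}-f\|_2^2 = \|f_{n,K}-\ee f_{n,K}\|_2^2 + 2\!\int\!(f_{n,K}-\ee f_{n,K})g_n\,dx + \|g_n\|_2^2,
\end{equation*}
combined with self-adjointness and idempotency of $E_{j_n}$ (hence $E_{j_n}g_n = 0$), makes $\int(f_{n,K}-\ee f_{n,K})g_n\,dx = 0$ almost surely. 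Expanding the centered square and using the reproducing identity $\int K_{j_n}(x,u)K_{j_n}(x,v)\,dx = K_{j_n}(u,v)$ then gives
\begin{equation*}
J_n = R_n + U_n,\qquad U_n := \frac{2}{n^2}\!\!\sum_{1\leq i<j\leq n}\!\!h(X_i,X_j),
\end{equation*}
where $h(u,v) := K_{j_n}(u,v)-k_{j_n}(u)-k_{j_n}(v)+\|k_{j_n}\|_2^2$ is a degenerate kernel ($\ee h(u,X_1)=0$) and $R_n := \frac{1}{n^2}\sum_i[h(X_i,X_i)-\ee h(X_1,X_1)]$ is a centered i.i.d.\ diagonal sum. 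Since $|h(X_i,X_i)|=O(2^{j_n})$, one has $\Var(n2^{-j_n/2}R_n)=O(n^{-(1-\delta)})$, so Chebyshev shows $R_n$ contributes $O(n^{-(1-\delta)/3})$ to the Kolmogorov distance, negligible against $n^{-3\delta/16}$ for $\delta<1/3$.

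For $S_n := \sum_{i<j}h(X_i,X_j)$, degeneracy makes $S_n = \sum_{j=2}^n D_j$ a martingale for $\mathcal F_j := \sigma(X_1,\ldots,X_j)$ with $D_j := \sum_{i<j}h(X_i,X_j)$. I will apply the Heyde-Brown (1970) martingale Berry-Esseen inequality
\begin{equation*}
\sup_t\bigl|\Pr(S_n/s_n\leq t)-\Phi(t)\bigr|\leq C\bigl[s_n^{-4}\textstyle\sum_j\ee D_j^4 + s_n^{-4}\ee(V_n^2-s_n^2)^2\bigr]^{1/5},
\end{equation*}
where $s_n^2 := \Var S_n$ and $V_n^2 := \sum_j\ee[D_j^2\mid\mathcal F_{j-1}]$. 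The bound $|K_{j_n}(x,y)|\leq 2^{j_n}\Phi(2^{j_n}(x-y))$ with the substitution $u=2^{j_n}(x-y)$ gives $\ee h^2 \asymp 2^{j_n}$, $\ee h^4 \lesssim 2^{3j_n}$, whence $s_n^2 \asymp n^2 2^{j_n}$ and $s_n^{-4}\sum_j\ee D_j^4 = O(1/n)$. For $V_n^2$ the relevant kernel is $H(u,v) := \int h(u,z)h(v,z)f(z)\,dz$, and \emph{the main difficulty will be} to show $\ee H^2 \asymp 2^{j_n}$ rather than the naive Cauchy-Schwarz bound $|H(u,v)|\leq\sqrt{\psi_2(u)\psi_2(v)}$, which only gives $\ee H^2\lesssim 2^{2j_n}$ (with $\psi_2(u)=\ee h(u,X_1)^2 \asymp 2^{j_n}f(u)$) and would make $\ee(V_n^2/s_n^2-1)^2=O(1)$. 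The reproducing identity instead yields $H(u,v) = K_{j_n}(u,v)f(v) + \text{lower order}$, so $H$ inherits the localisation of $K_{j_n}$; splitting $V_n^2$ into its i.i.d.\ diagonal and its degenerate off-diagonal U-statistic then gives $s_n^{-4}\ee(V_n^2-s_n^2)^2 = O(n^{-\delta})$. Heyde-Brown then produces $\sup_t|\Pr(S_n/s_n\leq t)-\Phi(t)| = O(n^{-\delta/5}) \leq C n^{-3\delta/16}$.

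Finally, I will replace $\Phi(t/\sigma_n)$, where $\sigma_n^2:=\Var(n2^{-j_n/2}U_n)$, by $\Phi(t/\sigma)$. The substitution $u=2^{j_n}(x-y)$ reduces $2^{-j_n}\ee K_{j_n}(X_1,X_2)^2$ to
\begin{equation*}
2^{-j_n}\ee K_{j_n}(X_1,X_2)^2 = \int P(2^{j_n}y)f(y)^2\,dy + O(2^{-j_n\alpha}),\quad P(v) := \textstyle\sum_k\phi(v-k)^2,
\end{equation*}
using H\"older continuity of $f$ on $[-L,L]$ and tail monotonicity to replace $f(y+u/2^{j_n})$ by $f(y)$ within the support of $K$. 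Since $P$ is $1$-periodic with $\int_0^1 P = \|\phi\|_2^2 = 1$, a Riemann-sum argument exploiting the same hypotheses gives $\int P(2^{j_n}y)f^2\,dy = \int f^2 + O(2^{-j_n\alpha})$, hence $|\sigma_n^2 - \sigma^2| = O(n^{-\alpha\delta})$. The uniform-in-$t$ comparison of $\Phi(t/\sigma_n)$ and $\Phi(t/\sigma)$ is controlled on $|t|\leq C\sqrt{\log n}$ by the Lipschitz bound $O(|t||\sigma_n-\sigma|/\sigma^2) = O(\sqrt{\log n}\cdot n^{-\alpha\delta})$, and outside this range by the sub-polynomial Gaussian tails of both distributions, yielding $O(n^{-\alpha\delta}\sqrt{\log n})$. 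Combining this with the Heyde-Brown bound of the previous step and the Chebyshev bound for $R_n$ delivers the stated estimate $C(n^{-3\delta/16}\vee n^{-\alpha\delta}\sqrt{\log n})$.
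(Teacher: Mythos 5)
Your proposal is correct and follows the same overall strategy as the paper: kill the bias cross-term by orthogonality of the projection, decompose $J_n$ into a negligible diagonal plus a degenerate U-statistic, apply a martingale Berry--Esseen inequality to the U-statistic, and then account for the rate at which the normalising variance converges to $\sigma^2$. There are, however, several genuine (and in some respects sharper) differences worth noting.

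First, you invoke Heyde--Brown (1970) with fourth moments, which gives a bound of the form $[s_n^{-4}\sum\ee D_j^4 + s_n^{-4}\ee(V_n^2-s_n^2)^2]^{1/5}$, whereas the paper uses Erickson--Quine--Weber (1979) with exponent $\eta$ and the rate $[\sum\ee|X_{ni}|^{2+\eta}+\ee|1-\sigma_n^2|^{1+\eta/2}]^{1/(3+\eta)}$; optimising at $\eta=1$ gives the paper a fourth root. Your fifth-root bound yields $O(n^{-\delta/5})$, slightly sharper than the paper's $O(n^{-3\delta/16})$, and still implies the stated bound since $1/5>3/16$. Second, your computation of the variance convergence is more direct: rather than passing through the $C_n/R_n$ approximation of Lemmas \ref{Lemma 2.5}--\ref{Lemma Cn2} (a four-fold change of variables and the $I_4',\dots,I_7'$ decomposition), you use the reproducing identity $\int K_{j_n}(x,u)K_{j_n}(x,v)\,dx=K_{j_n}(u,v)$ to collapse the quantity to $\int P(2^{j_n}y)f^2(y)\,dy$ with $P(v)=\sum_k\phi^2(v-k)$ $1$-periodic of mean $1$, after which a one-dimensional Riemann-sum argument with the same H\"older/monotonicity hypotheses gives $O(2^{-\alpha j_n})$ directly. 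This also sidesteps the extraneous $n^{-\delta/2}$ term appearing in Corollary \ref{Lemma Rn2}. Third, you handle the diagonal $R_n$ by plain Chebyshev at cost $O(n^{-(1-\delta)/3})$, instead of the paper's Bernstein-type exponential bound $\eqref{eq: 3.5}$; this is weaker but still negligible for $\delta<1/3$. Fourth, you avoid the paper's two-stage $\epsilon_{2,n}$-approximation of $\frac{2^{3j_n/2}}{n\sigma}U_n$ by $S_{nn}$, observing instead that $n2^{-j_n/2}U_n/\sigma_n$ \emph{is} $S_n/s_n$ and comparing $\Phi(t/\sigma_n)$ with $\Phi(t/\sigma)$ at the very end. (Incidentally, since $\sup_t|t|\varphi(t/\sigma)$ is bounded, that comparison is in fact $O(|\sigma_n-\sigma|)=O(n^{-\alpha\delta})$ without the $\sqrt{\log n}$; the logarithm in the theorem statement stems from the paper's $\epsilon_{2,n}$-trick, which your route does not require.) The one place the proposal is thin is the claimed bound $\ee H^2\asymp 2^{j_n}$, which you correctly flag as the crux but justify only heuristically via the localisation $H(u,v)\approx K_{j_n}(u,v)f(v)$; a clean route is to note that $K_{j_n}(u,\cdot)$ and $K_{j_n}(v,\cdot)$ have disjoint supports once $|u-v|\gtrsim 2^{-j_n}$, so $|H(u,v)|\lesssim 2^{j_n}\mathbf 1\{|u-v|\lesssim 2^{-j_n}\}+O(1)$, and integrate; this matches the $\ee G_n^2=O(2^{-7j_n})$ bound the paper imports from Zhang--Zheng.
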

For example, if $2^{-j_n}\asymp n^{-1/5}$,
$\sup_t|\Pr\{n2^{-j_n/2}J_n\leq t\}-\Pr\{\sigma Z\leq t\}|\leq
C(n^{-3/80}\vee n^{-\alpha/5}\sqrt{\log n})$. No claim of optimality
of the rate obtained is made.

Zhang and Zheng (1999) used the fact that $J_n$ coincides with its
stochastic part, $\bar{J}_n$, where
\begin{equation} \label{eq: defJnb}
\bar{J}_n:=\|f_{n,K}-\ee f_{n,K}\|_2^2-\ee\|f_{n,K}-\ee
f_{n,K}\|_2^2.
\end{equation}
This is due to the orthogonality of the wavelet basis. We will
include a short proof later for completeness. Thus, there is no need
to analyze the bias part and assume more regularity conditions on
the density $f$ as is done in the kernel case (e.g., Hall, 1984; GM,
2004).

Next we set up some notations. Let $K$ be the projection kernel
associated with the scaling function $\phi$ as in $\eqref{eq: K}$.
Set
$$K_n(t,x):=K(2^{j_n}t,2^{j_n}x)\ \ {\rm and}\
\bar{K}_n(t,x):=K_n(t,x)-\ee K_n(t,X).$$ Then by $\eqref{eq:
defJnb}$,
\begin{equation}
\begin{split}   \label{eq: Jnbar}
\bar{J}_n&=\frac{2^{2j_n}}{n^2}\left[\int_\rr\left(\sum_{i=1}^n\bar{K}(2^{j_n}t,2^{j_n}X_i)\right)^2dt
    -\ee \int_\rr\left(\sum_{i=1}^n\bar{K}(2^{j_n}t,2^{j_n}X_i)\right)^2dt\right] \\
    &=:\frac{2^{2j_n}}{n^2}W_n(\rr),
\end{split}
\end{equation}
where
\begin{equation}
\begin{split}  \label{eq: Wn dec}
W_n(F):&=\int_F\left(\sum_{i=1}^n\bar{K}(2^{j_n}t,2^{j_n}X_i)\right)^2dt
    -\ee\int_F\left(\sum_{i=1}^n\bar{K}(2^{j_n}t,2^{j_n}X_i)\right)^2dt \\
      &= U_n(F)+L_n(F),
\end{split}
\end{equation}
\begin{equation}
U_n(F)=\sum_{1\leq i\neq j\leq n}\int_F
\bar{K}_n(t,X_i)\bar{K}_n(t,X_j)dt, \ \ L_n(F)=\sum_{i=1}^n\int_F
\left(\bar{K}_n^2(t,X_i)-\ee\bar{K}_n^2(t,X)\right)dt.
\end{equation}
The measurable set $F$ will normally be $\rr$, $[-M,M]$ or
$[-M,M]^C$, $M>0$, with $\int_F f(t)dt>0$. But in the results below,
$F$ can be any set with this property and such that
\begin{equation} \label{eq: F}
\lambda(\{x+y:x\in F,|y|<\varepsilon\}\cap F^c)\to 0 \ {\rm as}\
\varepsilon\to 0.
\end{equation}

The proof of Theorem $\ref{Theorem 5.1}$ for the most part follows
the same pattern in (GM): For some $M$ large enough, $W_n([-M,M]^C)$
is shown to be negligible by using an exponential inequality for
degenerate U-statistics (Gin\'{e}, Lata{\l}a and Zinn, 2000) and
Bernstein's inequality for the diagonal term. Therefore, we may
truncate $\bar{J}_n$ and deal with $W_n([-M,M])$. This is
approximated by a Gaussian chaos using strong approximations
(Koml\'{o}s-Major-Tusn\'{a}dy inequality) and a moderate deviation
is proved for it. Finally, one deals with the usual blocking of laws
of the iterated logarithm. Here it can be implemented again because
of Bernstein type exponential inequalities for $U$-statistics.
However, due to the fact that $K(x,y)$ is not a convolution kernel,
the computation of the limiting variance turns out to be a major
difficulty, which we surmount using ideas from the proof of CLT in
Zhang and Zheng (1999). For this we require $f$ to be (improper)
Riemann integrable on $\rr$, and this is the purpose of condition
$(f)$.

In order to get the convergence rate in CLT, we need to assume more
conditions on $f$. $\bar{J}_n$ is composed of $L_n(\rr)$ and
$U_n(\rr)$. The exponential inequality for U-statistics is used to
show $L_n(\rr)$ is negligible. Then $U_n(\rr)$ is approximated by a
martingale and the rate of convergence was obtained using Erickson,
Quine and Weber (1979)'s result. The U-statistics method and the
application of the martingale limit theory can be traced back to
Hall (1984). It makes the study of $L_2$ error easier, but it does
not apply to $L_p$ error if $p\neq2$.

The article is organized as follows. In section 2 we collect the
variance computation results. In section 3 we state results of tail
estimation. In section 4, we obtain a moderate deviation result for
$W_n([-M,M])$. In section 5, we complete the proofs of Theorem
$\ref{Theorem 5.1}$ and Theorem $\ref{Theorem BE}$. In the appendix,
we give proofs to some lemmas stated in section 2. $C$ is a
universal constant which might differ from line to line.

\section{Variance Computations}

  We present here some inequalities and variance computations used
throughout the paper. Only the exact limits present problems and
must be treated differently than in the case of convolution kernels,
but upper bounds can be dealt with essentially as in the convolution
kernel case because of the majorization property $\eqref{eq: HKPT}$.
We will state these results without giving detailed proofs. They can
be verified by replacing, in the corresponding proofs by (GM), the
bandwidth $h_n$ by $2^{-j_n}$ and the projection kernel $K(x,y)$ by
a convolution kernel $\Phi(x-y)$ that is given by $\eqref{eq:
HKPT}$. More specifically, if the kernel $K(x,y)$ satisfies
$\eqref{eq: HKPT}$, we have the following estimates: For all $x$ and
$y$, and all measurable sets $F$,
%(2.18)
\begin{equation} \label{eq: 2.18}
\int_F \bar{K}_n^2(t,x)dt\leq 4\cdot2^{-j_n}\|\Phi\|_2^2,
\end{equation}
%(2.17)
\begin{equation} \label{eq: 2.17}
\left|\int_F \bar{K}_n^2(t,x)dt-\ee\int_F
\bar{K}_n^2(t,X)dt\right|\leq 8\cdot2^{-j_n}\|\Phi\|_2^2,
\end{equation}
and by Cauchy-Schwarz,
%(2.16)
\begin{equation}  \label{eq: 2.16}
\int_F \left|\bar{K}_n(t,x)\bar{K}_n(t,y)\right|dt\leq
4\cdot2^{-j_n}\|\Phi\|_2^2.
\end{equation}
We have an analogue to Corollary 2.7, (GM).
\begin{corollary}\label{var}
Assume (f), (S1) and (B1) hold, and that F satisfies condition
$\eqref{eq: F}$. Then there exists $n_0=n_0(F)$ such that, for all
$n\geq n_0$,
%(2.14)
\begin{equation}
{\rm Var}\int_F\bar{K}_n^2(t,X)dt\leq8\cdot
2^{-2j_n}\|\Phi\|_2^4\int_F f(x)dx.
\end{equation}
And for all $n$,
\begin{equation} \label{eq: var bd}
{\rm Var}\int_F\bar{K}_n^2(t,X)dt\leq4\cdot 2^{-2j_n}\|\Phi\|_2^4.
\end{equation}
\end{corollary}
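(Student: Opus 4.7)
Write $g(x) := \int_F \bar K_n^2(t,x)\,dt$, so the claim concerns $\Var(g(X))$. The uniform estimate \eqref{eq: 2.18} gives $0 \le g(x) \le 4\cdot 2^{-j_n}\|\Phi\|_2^2$ for every $x$, and Popoviciu's inequality $\Var(Y) \le (b-a)^2/4$ with $a=0$ and $b=4\cdot 2^{-j_n}\|\Phi\|_2^2$ immediately yields the universal bound $\Var(g(X)) \le 4\cdot 2^{-2j_n}\|\Phi\|_2^4$, which is \eqref{eq: var bd}.

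For the sharper bound valid for $n \ge n_0$, I would combine the same uniform bound with the elementary inequality $\Var(g(X)) \le \ee[g(X)^2] \le (\sup_x g(x))\,\ee[g(X)]$ (valid since $g \ge 0$) to reduce matters to showing $\ee[g(X)] \le 2\cdot 2^{-j_n}\|\Phi\|_2^2 \int_F f(x)\,dx$ for $n$ large. Starting from $\ee[g(X)] = \int_F \Var(K_n(t,X))\,dt \le \int_F \ee[\Phi^2(2^{j_n}(t-X))]\,dt$ via the majorization \eqref{eq: HKPT}, Fubini and the substitution $u=2^{j_n}(t-x)$ yield
$$\ee[g(X)] \le 2^{-j_n}\int f(x)\int_{2^{j_n}(F-x)}\Phi^2(u)\,du\,dx.$$
Since $\Phi$ is supported in $[-A,A]$, the inner integral vanishes unless $x$ lies in the enlargement $F^{A\cdot 2^{-j_n}} := \{y+z : y\in F,\,|z|<A\cdot 2^{-j_n}\}$ and is bounded by $\|\Phi\|_2^2$ otherwise, so $\ee[g(X)] \le 2^{-j_n}\|\Phi\|_2^2 \int_{F^{A\cdot 2^{-j_n}}} f(x)\,dx$.

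The only essential use of condition \eqref{eq: F}, and the reason $n_0$ must depend on $F$, is the final comparison $\int_{F^{A\cdot 2^{-j_n}}} f \le 2\int_F f$: since $j_n \to \infty$ by (B1) and $f$ is bounded by (f), \eqref{eq: F} forces $\int_{F^{A\cdot 2^{-j_n}}\setminus F} f \to 0$, so such an $n_0(F)$ exists (using $\int_F f>0$). Multiplying the resulting bound on $\ee[g(X)]$ against $\sup g \le 4\cdot 2^{-j_n}\|\Phi\|_2^2$ produces the advertised constant $8$. I expect no serious obstacle here: once the convolution majorant $\Phi$ is invoked via \eqref{eq: HKPT}, the computation reduces to a second-moment estimate directly modeled on (GM, Corollary 2.7), which is exactly why the paper invites the reader to verify it by that substitution.
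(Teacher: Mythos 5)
Your proof is correct and is essentially the unpacking the paper intends: the paper defers to (GM, Corollary~2.7) with $h_n\mapsto 2^{-j_n}$ and $K(x,y)$ majorized by $\Phi(x-y)$, and your argument — bounding $\operatorname{Var}$ by $(\sup g)\,\ee g$ with the $\ee g$ estimate coming from the convolution majorant, Fubini, a change of variables, and condition \eqref{eq: F} to compare the $A\cdot 2^{-j_n}$-enlargement of $F$ against $F$ itself — is precisely that computation, reproducing the constants $8$ and $4$ exactly. The Popoviciu step for the universal bound is the natural way to get the factor $4$ from the pointwise range $[0,\,4\cdot 2^{-j_n}\|\Phi\|_2^2]$, and the rest (in particular the role of $\int_F f>0$ and (B1) in making $n_0(F)$ finite) is stated cleanly; no gaps.
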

Set
\begin{equation} \label{eq: Rn}
C_n(t,s): =2^{j_n}\int_\rr K_n(t,x)K_n(s,x)f(x)dx,
\ \ R_n(t,s): =2^{j_n}\int_\rr\bar{K}_n(t,x)\bar{K}_n(s,x)f(x)dx.
\end{equation}
Define the operator $\mathcal{R}_{n,F}$ for $\varphi\in L_2(F)$,
\begin{equation} \label{eq: operator}
\mathcal{R}_{n,F}\varphi(s)=\int_FR_n(s,t)\varphi(t)dt.
\end{equation}
The next three lemmas are similar to Lemmas 2.3, 2.4 and 2.5, (GM).
% Lemma 2.3
\begin{lemma} \label{Lemma 2.3}
Under the hypotheses of Corollary $\ref{var}$, for the operator
$\mathcal{R}_{n,F}$, we have
\begin{equation}
\sup\{\|\mathcal{R}_{n,F}\varphi\|_2^2: \|\varphi\|_2=1,\varphi\in
L_2(F)\}\leq2^{-2j_n}C(\Phi,f),
\end{equation}
where
\begin{equation} \label{eq: CPhi}
C(\Phi,f)=2\|\Phi\|_1^4\left(\|f\|_\infty^2+\|f\|_2^4\right).
\end{equation}
\end{lemma}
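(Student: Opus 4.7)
The plan is to follow the strategy of Lemma 2.3 in (GM), exploiting the majorization \eqref{eq: HKPT} of $K(x,y)$ by the convolution kernel $\Phi(x-y)$ so that all operator–theoretic estimates reduce to convolution bounds. First I would decompose the kernel $R_n$ by expanding $\bar K_n(t,x) = K_n(t,x) - g(t)$, where $g(t) := \ee K_n(t,X)$. A short computation using $\ee K_n(t,X) = g(t)$ yields
\begin{equation*}
R_n(s,t) \;=\; 2^{j_n}\!\!\int_{\rr} K_n(s,x)K_n(t,x)f(x)\,dx \;-\; 2^{j_n}g(s)g(t) \;=\; C_n(s,t) - 2^{j_n}g(s)g(t),
\end{equation*}
so the operator $\mathcal{R}_{n,F}$ splits as $\mathcal{C}_{n,F}$ minus a rank-one operator. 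By the triangle inequality it suffices to bound the operator norms of the two pieces separately.

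Next I would bound $\mathcal{C}_{n,F}$ by Schur's test (equivalently, Young's convolution inequality). Using $|K_n(t,x)| \leq \Phi(2^{j_n}(t-x))$, the change of variables $u=2^{j_n}(s-x)$, and symmetry of $\Phi$, one gets the convolution estimate
\begin{equation*}
|C_n(s,t)| \;\leq\; \|f\|_\infty\,(\Phi*\Phi)\bigl(2^{j_n}(s-t)\bigr),
\end{equation*}
whose row and column sums both equal $\|f\|_\infty\,2^{-j_n}\|\Phi\|_1^2$. Hence $\|\mathcal{C}_{n,F}\|_{\rm op} \leq 2^{-j_n}\|f\|_\infty\|\Phi\|_1^2$. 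For the rank-one piece, I would observe that its operator norm is exactly $2^{j_n}\|g\|_{L_2(F)}^2$, and estimate $\|g\|_2$ by Cauchy-Schwarz: writing
\begin{equation*}
|g(t)|^2 \;\leq\; \Bigl(\int \Phi(2^{j_n}(t-y))f(y)\,dy\Bigr)^{\!2} \;\leq\; 2^{-j_n}\|\Phi\|_1\!\int\Phi(2^{j_n}(t-y))f(y)^2\,dy,
\end{equation*}
and integrating in $t$ with Fubini yields $\|g\|_2^2 \leq 2^{-2j_n}\|\Phi\|_1^2\|f\|_2^2$. Therefore the rank-one operator has norm at most $2^{-j_n}\|\Phi\|_1^2\|f\|_2^2$.

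Adding the two estimates gives $\|\mathcal{R}_{n,F}\|_{\rm op} \leq 2^{-j_n}\|\Phi\|_1^2(\|f\|_\infty + \|f\|_2^2)$, and squaring together with $(a+b)^2\leq 2(a^2+b^2)$ produces exactly the constant $C(\Phi,f) = 2\|\Phi\|_1^4(\|f\|_\infty^2 + \|f\|_2^4)$ in \eqref{eq: CPhi}. There is no real analytical obstacle here; the only delicate point is keeping the power of $2^{j_n}$ straight through the change of variables and remembering that $g$ is of order $2^{-j_n}$ (in contrast with the convolution-kernel case where the analogous quantity is $O(1)$), so that the extra factor of $2^{j_n}$ in front of $g(s)g(t)$ is precisely what is needed to make the rank-one term scale as $2^{-j_n}$ rather than blowing up.
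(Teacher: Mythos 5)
Your proof is correct, and it follows essentially the same route the paper has in mind (the paper defers to Lemma 2.3 of Giné--Mason with $h_n\mapsto 2^{-j_n}$ and $K(x,y)$ majorized by $\Phi(x-y)$): the decomposition $R_n = C_n - 2^{j_n}g\otimes g$, Schur's test on the $C_n$ piece, and Cauchy--Schwarz plus Fubini on the rank-one piece are exactly what produces the constant $2\|\Phi\|_1^4(\|f\|_\infty^2 + \|f\|_2^4)$, with the factor $2$ coming from $(a+b)^2\le 2(a^2+b^2)$ as you note.
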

\begin{lemma} \label{Lemma 2.4b}
Under the hypotheses of Corollary $\ref{var}$,
\begin{equation}
\begin{split}    \label{eq: lemma 2.4}
\limsup_{n\to\infty}2^{j_n}\int_{F^2}C_n^2(s,t)dsdt&\leq\int_F
f^2(x)dx\int_\rr\left(\int_\rr\Phi(w+u)\Phi(w)dw\right)^2du\\
&\leq\int_Ff^2(x)dx\|\Phi\|_1^2\|\Phi\|_2^2.
\end{split}
\end{equation}
\end{lemma}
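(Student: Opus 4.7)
The plan is to reduce to the convolution-kernel calculation of (GM, Lemma 2.4) via the majorization $\eqref{eq: HKPT}$, and then use $\eqref{eq: F}$ to suppress the boundary contribution of $F$.

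First, I would replace $C_n$ by its pointwise majorant $\tilde C_n(s,t):=2^{j_n}\int\Phi(2^{j_n}(t-x))\Phi(2^{j_n}(s-x))f(x)\,dx$, so $C_n^2\leq \tilde C_n^2$ and it suffices to bound $2^{j_n}\int_{F^2}\tilde C_n^2\,ds\,dt$. Expanding the square, applying Fubini, and using symmetry in $s$ and $t$, this rewrites as $2^{3j_n}\int\int B_n(x_1,x_2)^2 f(x_1)f(x_2)\,dx_1\,dx_2$ with $B_n(x_1,x_2):=\int_F\Phi(2^{j_n}(s-x_1))\Phi(2^{j_n}(s-x_2))\,ds$. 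Substituting $v=2^{j_n}(s-x_1)$ in $B_n$ and then $u=2^{j_n}(x_2-x_1)$ in place of $x_2$, the $2^{j_n}$ factors cancel and the expression becomes
\begin{equation*}
\int dx_1\int du\; I_{n,x_1}^2(u)\,f(x_1)\,f(x_1+2^{-j_n}u),\qquad I_{n,x_1}(u):=\int_{2^{j_n}(F-x_1)}\Phi(v)\Phi(v-u)\,dv.
\end{equation*}
Note $|I_{n,x_1}(u)|\leq\Psi(u):=\int\Phi(w)\Phi(w-u)\,dw$; by symmetry of $\Phi$ this $\Psi$ is exactly the autocorrelation appearing in the statement, and it is bounded with compact support.

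Next, I would split the $x_1$-integral as $\int_F+\int_{F^c}$. On $F$, the crude bound $I_{n,x_1}\leq\Psi$ reduces the integral to at most $\int du\,\Psi^2(u)\int_F dx_1\,f(x_1)f(x_1+2^{-j_n}u)$; boundedness of $f$ combined with the $L^1$-continuity of translations (which gives $\int_F f(x_1)f(x_1+h)\,dx_1\to\int_F f^2$ as $h\to 0$) and dominated convergence in $u$ yield the upper bound $\int_F f^2\cdot\int\Psi^2$. On $F^c$, the compact support of $\Phi$ in $[-A,A]$ forces $I_{n,x_1}\equiv 0$ unless $x_1\in F^c\cap(F+[-A\cdot 2^{-j_n},A\cdot 2^{-j_n}])$, a set whose Lebesgue measure tends to $0$ by $\eqref{eq: F}$; uniform boundedness of the integrand then makes this contribution $o(1)$. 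The second inequality in $\eqref{eq: lemma 2.4}$ is Young's convolution inequality applied to $\Psi=\Phi*\Phi$: $\|\Psi\|_2\leq\|\Phi\|_1\|\Phi\|_2$.

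The main obstacle is the boundary term on $F^c$, where $\eqref{eq: F}$ is precisely what routes around the lack of an exact convolution structure for $K$; the rest reduces to a change of variables and dominated convergence.
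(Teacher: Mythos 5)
Your proposal is correct and follows exactly the route the paper indicates: majorize $|K(x,y)|$ by the convolution kernel $\Phi(x-y)$ via \eqref{eq: HKPT}, perform the change of variables to reduce to $\int_\rr du\,\Psi^2(u)\int_F f(x)f(x+2^{-j_n}u)\,dx$ plus a boundary term controlled by \eqref{eq: F}, and close with $L^1$-continuity of translations and Young's inequality. The paper defers the details to (GM) with the substitutions $h_n\mapsto 2^{-j_n}$, $K\mapsto\Phi$; your write-up simply spells out that argument.
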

% Lemma 2.5
\begin{lemma} \label{Lemma 2.5}
Under the hypotheses of Corollary $\ref{var}$,
\begin{equation} \label{eq: lemma 2.5}
2^{j_n}\int_{F^2}(C_n(s,t)-R_n(s,t))^2dsdt\leq
2^{-j_n}\|\Phi\|_1^4\|f\|_2^4\to0 \ \ {\textrm as}\ \ n\to\infty.
\end{equation}
\end{lemma}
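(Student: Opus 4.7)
The plan is to first derive an explicit closed-form expression for $C_n - R_n$, which will turn out to be a rank-one object that factorizes the double integral, and then bound the resulting one-dimensional integral using the majorization $\eqref{eq: HKPT}$ and Cauchy-Schwarz.

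Concretely, I would expand $\bar{K}_n(t,x)\bar{K}_n(s,x) = K_n(t,x)K_n(s,x) - K_n(t,x)\ee K_n(s,X) - K_n(s,x)\ee K_n(t,X) + \ee K_n(t,X)\ee K_n(s,X)$, integrate against $f(x)\,dx$, and use that $\int K_n(t,x)f(x)\,dx = \ee K_n(t,X)$. Two of the cross terms cancel one copy of the final term, leaving
\begin{equation*}
C_n(s,t) - R_n(s,t) = 2^{j_n}\,g_n(s)\,g_n(t), \qquad g_n(t):=\ee K_n(t,X).
\end{equation*}
Because the kernel factors over $s$ and $t$, the double integral becomes a product:
\begin{equation*}
\int_{F^2}(C_n(s,t)-R_n(s,t))^2\,ds\,dt = 2^{2j_n}\Bigl(\int_F g_n(t)^2\,dt\Bigr)^2.
\end{equation*}

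The remaining work is to estimate $\int_F g_n(t)^2\,dt$. From $\eqref{eq: HKPT}$ we have $|g_n(t)| \leq \int \Phi(2^{j_n}(t-x))f(x)\,dx$. I would then apply Cauchy-Schwarz with the weight $\Phi(2^{j_n}(t-x))$ split as $\Phi^{1/2}\cdot \Phi^{1/2}f$, giving $g_n(t)^2 \leq 2^{-j_n}\|\Phi\|_1\int \Phi(2^{j_n}(t-x))f(x)^2\,dx$. Integrating in $t$ over $F$ (bounding by an integral over $\rr$) and applying Fubini, the inner integral over $t$ of $\Phi(2^{j_n}(t-x))$ produces another factor of $2^{-j_n}\|\Phi\|_1$, while the $x$-integral contributes $\|f\|_2^2$. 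The result is $\int_F g_n(t)^2\,dt \leq 2^{-2j_n}\|\Phi\|_1^2\|f\|_2^2$. Substituting back and multiplying by $2^{j_n}$ gives exactly $2^{-j_n}\|\Phi\|_1^4\|f\|_2^4$, and the limit to zero is then immediate from (B1).

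There is no real obstacle in this proof; the only point that requires a bit of care is the cancellation that collapses $C_n - R_n$ into the rank-one product $2^{j_n}g_n(s)g_n(t)$, since this structural identity is what makes the $L_2$-bound sharp enough to beat the $2^{j_n}$ prefactor. Once that identity is in hand, the majorization by $\Phi$ reduces everything to a routine Young/Cauchy-Schwarz convolution estimate that mirrors the computation used in the convolution-kernel analog (Lemma 2.5, GM).
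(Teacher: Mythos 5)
Your proof is correct: the algebraic identity $C_n(s,t)-R_n(s,t)=2^{j_n}g_n(s)g_n(t)$ with $g_n=\ee K_n(\cdot,X)$ holds, the rank-one factorization collapses the double integral, and the Cauchy--Schwarz/Young estimate via $|K(x,y)|\le\Phi(x-y)$ yields exactly $2^{-j_n}\|\Phi\|_1^4\|f\|_2^4$. This is precisely the route the paper indicates (adapting Lemma 2.5 of Gin\'e--Mason by replacing the convolution kernel with the dominating $\Phi$ from \eqref{eq: HKPT}), so you have reconstructed the intended argument.
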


Note that in Lemma $\ref{Lemma 2.4b}$, we can only get an upper
bound instead of the limit using the same method from the
convolution kernel case. Calculation of the exact limit of
$2^{j_n}\int_{[-M,M]^2}R_n^2(s,t)dsdt$ is the key to obtaining the
scaling constant in LIL. By Lemma $\ref{Lemma 2.5}$, we shall
approximate it by $2^{j_n}\int_{[-M,M]^2}C_n^2(s,t)dsdt$ and
calculate the limit of this quantity.
%Lemma 2.4
\begin{lemma} \label{Lemma 2.4}
Assume (f) and (B1) holds, and the scaling function $\phi$ satisfies
(S1) such that the kernel K associated with $\phi$ is dominated by
$\Phi$ whose support is contained in $[-A,A]$, where $A$ is an
integer. Then for any $M>0$,
\begin{equation}
\label{eq: limit}
\lim_{n\to\infty}2^{j_n}\int_{[-M,M]^2}C_n^2(s,t)dsdt
=\int_{-M}^Mf^2(y)dy.
\end{equation}
\end{lemma}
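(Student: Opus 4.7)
The strategy is to rescale by $2^{j_n}$ and exploit the key projection identity $\int_\rr K(u,y_1)K(u,y_2)\,du = K(y_1,y_2)$ (which comes from orthonormality of $\{\phi(\cdot-k)\}$), reducing the computation to averaging the $1$-periodic function $g(y):=K(y,y)=\sum_k\phi^2(y-k)$ against $f^2$. Substituting $u=2^{j_n}s$, $v=2^{j_n}t$, $y=2^{j_n}x$, the left side of \eqref{eq: limit} becomes $2^{-j_n}\int_{I_n^2}\tilde C_n^2(u,v)\,du\,dv$ with $I_n=[-2^{j_n}M,2^{j_n}M]$ and $\tilde C_n(u,v)=\int K(u,y)K(v,y)f(2^{-j_n}y)\,dy$. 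Expanding the square and applying Fubini yields
\begin{equation*}
2^{-j_n}\iint f(2^{-j_n}y_1)f(2^{-j_n}y_2)\,P_n(y_1,y_2)^2\,dy_1\,dy_2,\qquad P_n(y_1,y_2):=\int_{I_n}K(u,y_1)K(u,y_2)\,du.
\end{equation*}

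Because $\phi$ is compactly supported, $K(u,y)=0$ for $|u-y|>A$, so for $(y_1,y_2)$ in the interior $I_n^\ast:=[-2^{j_n}M+A,\,2^{j_n}M-A]^2$ the projection identity gives $P_n(y_1,y_2)=K(y_1,y_2)$ exactly. Off the interior, Cauchy--Schwarz bounds $P_n(y_1,y_2)^2\le g(y_1)g(y_2)\le\|g\|_\infty^2$, and the support constraint $|y_1-y_2|\le 2A$ limits the boundary strip to Lebesgue measure $O(A^2)$; thus the boundary contribution is $O(1)$, hence $o(1)$ after the prefactor $2^{-j_n}$. On the interior I write $y_2=y_1+w$ and replace $f(2^{-j_n}(y_1+w))$ by $f(2^{-j_n}y_1)$; setting $x=2^{-j_n}y_1$ and noting that $|w|\le 2A$ forces $2^{-j_n}w\to 0$, the replacement error is bounded by a constant times $\int_{|w|\le 2A}\int_{-M}^M|f(x+2^{-j_n}w)-f(x)|\,dx\,dw$, which tends to $0$ by $L^1$-continuity of translation applied to $f\mathbf{1}_{[-M-1,M+1]}\in L^1$ (guaranteed by (f)) and dominated convergence in the outer $w$-integral.

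Finally, using $\int K^2(y,y+w)\,dw=\int K^2(y,v)\,dv=K(y,y)=g(y)$ and changing variables $x=2^{-j_n}y$, the bulk collapses to $\int_{-M}^M f^2(x)\,g(2^{j_n}x)\,dx+o(1)$. The function $g$ is $1$-periodic and bounded, with $\int_0^1 g(u)\,du=\|\phi\|_2^2=1$; since $f^2\mathbf{1}_{[-M,M]}\in L^1$, the classical averaging result (for bounded periodic $g$ and $L^1$ integrands, proved by approximating indicators by a whole-period count plus an $O(1/N)$ remainder and extending by density) gives $\int_{-M}^M f^2(x)g(2^{j_n}x)\,dx\to\int_{-M}^M f^2(x)\,dx$. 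The main obstacle, compared with Lemma~\ref{Lemma 2.4b}, is extracting the \emph{exact} constant rather than an upper bound: both the genuine projection identity and the periodic-averaging step have no analogue in the convolution-kernel setting where only the majorization $|K|\le\Phi$ is available.
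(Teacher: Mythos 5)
Your proof is correct, and it takes a genuinely different route from the paper's. The paper carries out a multi-step change of variables, splits the $x$-integral into integer shifts, and—using the period-one structure $K(x+1,y+1)=K(x,y)$—separates the integrand into a pure kernel factor (which integrates to $1$ by the projection identity, Lemma~\ref{Lemma K int}) and a Riemann-sum factor $I(j_n)$ shown to converge to $\int_{-M}^M f^2$ via upper and lower sums. You instead expand $\tilde C_n^2$ by Fubini, invoke the projection identity directly to collapse $\int K(u,y_1)K(u,y_2)\,du$ to $K(y_1,y_2)$ on the bulk (treating the boundary strip as negligible), decouple $f(2^{-j_n}y_1)$ from $f(2^{-j_n}y_2)$ via $L^1$-continuity of translation, apply the identity once more to obtain $g(y_1)=K(y_1,y_1)$, and finish with the classical averaging lemma for a bounded $1$-periodic function against $L^1$ data. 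Both routes exploit the projection identity and periodicity of $K$, but yours needs only $f$ bounded and in $L^1$, whereas the paper's Riemann-sum step appeals to Riemann integrability of $f^2$ (which it attributes, somewhat incautiously, to boundedness plus $L^1$—not actually true in general). The paper's route does pay off in Lemma~\ref{Lemma Cn2}, where the H\"older hypothesis turns the Riemann-sum error into an explicit rate; a quantitative version of your translation-continuity and averaging steps would be needed to recover that result.
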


In order to prove Theorem $\ref{Theorem BE}$, we need to estimate
how fast $2^{j_n}\int_{\rr^2}C_n^2(s,t)dsdt$ converges to $\int_\rr
f^2(y)dy$. This can be done by imposing more regularity conditions
on $f$.
\begin{lemma}  \label{Lemma Cn2}
Under the hypotheses of Lemma $\ref{Lemma 2.4}$, and assume that, in
addition, $f$ is H\"{o}lder continuous with exponent $0<\alpha\leq1$
on [-L,L], and monotone on tails $(-\infty,-L]\cup[L,\infty)$, where
$L\geq0$. Then for all $n$, there exists a constant $C$ (depending
on $f$, $\phi$ and $\{j_n\}$), such that
\begin{equation} \label{eq: Cn2 bd}
\left|2^{j_n}\int_{\rr^2}C_n^2(s,t)dsdt-\int_\rr f^2(y)dy\right|\leq
Cn^{-\delta\alpha}
\end{equation}
where $\delta\in(0,1/3)$ is the same as in (B1).
\end{lemma}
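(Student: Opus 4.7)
The plan is first to reduce the four-fold integral to a double integral via the reproducing property of the projection kernel, and then to compare the resulting expression to $\int f^2$ by controlling separately the oscillation of $K$ (through a periodicity argument) and the oscillation of $f$ (through H\"older/monotonicity).

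\emph{Step 1 (Reproducing reduction).} Orthonormality of $\{\phi(\cdot-k)\}$ gives the reproducing identity $\int_\rr K(u,a)K(u,b)\,du=K(a,b)$. Applying this to integrate out first $s$ and then $t$ in $C_n^2(s,t)$, a change of variable $u=2^{j_n}s$ (and likewise $v=2^{j_n}t$) yields
\begin{equation*}
2^{j_n}\int_{\rr^2}C_n^2(s,t)\,ds\,dt = 2^{j_n}\int_{\rr^2}K(2^{j_n}x,2^{j_n}y)^2 f(x)f(y)\,dx\,dy.
\end{equation*}
Set $\kappa(v):=K(v,v)=\sum_k\phi(v-k)^2$ and $g_y(x):=2^{j_n}K(2^{j_n}x,2^{j_n}y)^2$. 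Then $\kappa$ is $1$-periodic with $\int_0^1\kappa=1$ and is bounded by (S1) and \eqref{eq: phi}; moreover $\int_\rr g_y(x)\,dx=\kappa(2^{j_n}y)$ and $g_y$ is supported on $|x-y|\le A2^{-j_n}$ by $|K|\le\Phi$. Writing $f(x)=f(y)+(f(x)-f(y))$ inside the inner integral, the quantity in \eqref{eq: Cn2 bd} splits as $I_1+I_2$, with
\begin{equation*}
I_1=\int_\rr f^2(y)\bigl[\kappa(2^{j_n}y)-1\bigr]\,dy,\qquad I_2=\int_\rr f(y)\int_\rr g_y(x)\bigl[f(x)-f(y)\bigr]\,dx\,dy.
\end{equation*}

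\emph{Step 2 (Bound on $I_1$).} Partition $\rr$ into dyadic intervals $I_m=[m2^{-j_n},(m+1)2^{-j_n}]$. By $1$-periodicity and mean $1$ of $\kappa$, $\int_{I_m}[\kappa(2^{j_n}y)-1]dy=0$ \emph{exactly}, so subtracting any constant from $f^2$ on $I_m$ gives
$\bigl|\int_{I_m}f^2[\kappa(2^{j_n}y)-1]dy\bigr|\le \omega_m\cdot 2^{-j_n}(1+\|\kappa\|_\infty)$, where $\omega_m$ is the oscillation of $f^2$ on $I_m$. On indices $m$ with $I_m\subset[-L,L]$ the H\"older hypothesis gives $\omega_m\le 2\|f\|_\infty L_f 2^{-j_n\alpha}$; summing over the $\asymp 2L\cdot 2^{j_n}$ such $m$ contributes $O(2^{-j_n\alpha})$. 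On the right tail $I_m\subset[L,\infty)$, $f^2$ is monotone decreasing and the oscillations telescope, $\sum_m\omega_m\le f^2(L)$, contributing $O(2^{-j_n})$; the left tail is symmetric.

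\emph{Step 3 (Bound on $I_2$).} For $y\in[-L+A2^{-j_n},L-A2^{-j_n}]$ the support of $g_y$ lies entirely in $[-L,L]$, so H\"older continuity gives $|f(x)-f(y)|\le L_f A^\alpha 2^{-j_n\alpha}$; using $\int g_y\le\|\Phi\|_2^2$ and $\int f=1$, the contribution is $O(2^{-j_n\alpha})$. For $y>L+A2^{-j_n}$ monotonicity of $f$ on $[L,\infty)$ gives $|f(x)-f(y)|\le f(y-A2^{-j_n})-f(y+A2^{-j_n})$ for all $x$ in the support of $g_y$, and an easy substitution together with $f\ge 0$ and monotonicity bounds
$\int_{L+h}^\infty f(y)[f(y-h)-f(y+h)]dy\le \int_L^{L+2h}f^2(v)\,dv\le 2h\|f\|_\infty^2$ with $h=A2^{-j_n}$, so this contributes $O(2^{-j_n})$; the left tail and the transition zones $\bigl||y|-L\bigr|\le A2^{-j_n}$ (measure $O(2^{-j_n})$, integrand bounded) are analogous.

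Combining, $|I_1|+|I_2|\le C\,2^{-j_n\alpha}\asymp C n^{-\delta\alpha}$ since $\alpha\le 1$ and $2^{-j_n}\asymp n^{-\delta}$ by (B1), with $C$ depending only on $f$, $\phi$ (through $L_f$, $L$, $\|f\|_\infty$, $\|\kappa\|_\infty$, $\|\Phi\|_2$, $A$) and the constants in (B1). The main technical obstacle I anticipate is the tail analysis: the monotonicity hypothesis delivers a telescoping bound for the sum of oscillations but does not give a pointwise rate for $f$, so one must exploit it in concert with $\int f<\infty$ and the compact support of $\Phi$ to avoid picking up a non-vanishing tail term. The reproducing-kernel reduction in Step 1 is what makes this separation clean; without it one would have to wrestle with the double oscillation of $K$ in both variables simultaneously.
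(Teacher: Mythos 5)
Your proof is correct, and it takes a genuinely different route than the paper. The paper (in the appendix) proves Lemma~\ref{Lemma Cn2} by refining the proof of Lemma~\ref{Lemma 2.4}: it performs the change of variables $y=x-2^{-j_n}u$, $t=2^{-j_n}w+x$, $s=2^{-j_n}z+x$, uses $K(x+1,y+1)=K(x,y)$ to reduce the $x$-integral to $[0,1]$, and thereby separates the four-fold integral into a kernel factor $\int K(x+z,x)K(x+w,x)K(x+z,x-u)K(x+w,x-u)\,dx\,du\,dz\,dw$ (shown to equal $1$ via the reproducing property in Lemma~\ref{Lemma K int}) times a Riemann-sum-type quantity $I(j_n)$, then estimates $|I(j_n)-\int_{-M}^M f^2|$ by H\"older continuity on $[-L,L]$ and monotone telescoping on the tails, and finally extends from $[-M,M]^2$ to $\rr^2$ by treating the four regions $[-M,M]^2$, $[-M,M]^C\times[-M,M]^C$, etc.\ separately. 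You instead invoke the reproducing identity $\int K(u,a)K(u,b)\,du=K(a,b)$ twice \emph{at the outset} to collapse the four-fold integral to the double integral $2^{j_n}\int K^2(2^{j_n}x,2^{j_n}y)f(x)f(y)\,dx\,dy$, and then split the error into $I_1$ (fluctuation of the $1$-periodic, mean-$1$ diagonal $\kappa=K(\cdot,\cdot)$ against $f^2$, killed on each dyadic cell by exact cancellation plus the H\"older/monotone oscillation of $f^2$) and $I_2$ (local oscillation of $f$ against the compactly supported profile $g_y$). Both proofs ultimately rest on the same two ingredients --- orthonormality of $\{\phi(\cdot-k)\}$ and the $1$-periodicity with unit mean of $\sum_k\phi^2(\cdot-k)$ --- and on the same regularity of $f$, but your version avoids the elaborate four-variable change of coordinates, works on $\rr^2$ directly without the $[-M,M]$-decomposition into four regions, and makes the role of each hypothesis (H\"older for the bulk, monotone telescoping plus $\int f<\infty$ for the tails) visibly separate. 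The handling of Fubini, the telescoping tail bound in $I_1$, and the substitution/telescoping trick for $\int_{L+h}^\infty f(y)[f(y-h)-f(y+h)]\,dy$ in $I_2$ are all sound.
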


Together with Lemma $\ref{Lemma 2.5}$, we obtain
\begin{corollary}   \label{Lemma Rn2}
Assume the same conditions in Lemma $\ref{Lemma Cn2}$, for all $n$
sufficiently large depending on $f$ and $\{j_n\}$,
\begin{equation}
\left|2^{j_n}\int_{\rr^2}R_n^2(s,t)dsdt-\int_\rr f^2(y)dy\right|\leq
C(n^{-\delta/2}+n^{-\delta\alpha}),
\end{equation}
where the constant $C$ depends on $f$, $\phi$ and $\{j_n\}$.
\end{corollary}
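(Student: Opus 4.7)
The plan is to reduce the corollary to Lemma \ref{Lemma Cn2} and Lemma \ref{Lemma 2.5} through a single triangle-inequality split inserting $C_n^2$ as an intermediate:
\begin{equation*}
\left|2^{j_n}\!\int_{\rr^2}\!R_n^2(s,t)\,ds\,dt-\int_\rr f^2(y)\,dy\right|\le \left|2^{j_n}\!\int_{\rr^2}\!(R_n^2-C_n^2)\,ds\,dt\right|+\left|2^{j_n}\!\int_{\rr^2}\!C_n^2\,ds\,dt-\int_\rr f^2(y)\,dy\right|.
\end{equation*}
The second summand is precisely the quantity controlled by Lemma \ref{Lemma Cn2}, which contributes $Cn^{-\delta\alpha}$ to the bound.

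To handle the first summand, I would factor $R_n^2-C_n^2=(R_n-C_n)(R_n+C_n)$ on $\rr^2$ and apply Cauchy--Schwarz, yielding
\begin{equation*}
\left|2^{j_n}\!\int_{\rr^2}\!(R_n^2-C_n^2)\,ds\,dt\right|\le 2^{j_n}\left(\int_{\rr^2}\!(R_n-C_n)^2\right)^{1/2}\!\left(\int_{\rr^2}\!(R_n+C_n)^2\right)^{1/2}.
\end{equation*}
Lemma \ref{Lemma 2.5} applied with $F=\rr$ gives $\int_{\rr^2}(R_n-C_n)^2\,ds\,dt\le 2^{-2j_n}\|\Phi\|_1^4\|f\|_2^4$, so the first square-root is of order $2^{-j_n}$. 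For the second square-root, bound $(R_n+C_n)^2\le 2R_n^2+2C_n^2$; Lemma \ref{Lemma 2.4b} (taking $F=\rr$) gives $2^{j_n}\int_{\rr^2}C_n^2\le C$, and combining this with Lemma \ref{Lemma 2.5} through $R_n^2\le 2C_n^2+2(R_n-C_n)^2$ gives the matching $2^{j_n}\int_{\rr^2}R_n^2\le C$ for large $n$. Hence the second square-root is of order $2^{-j_n/2}$, and the product of the three factors is bounded by $C\cdot 2^{j_n}\cdot 2^{-j_n}\cdot 2^{-j_n/2}=C\cdot 2^{-j_n/2}$. By (B1), $2^{-j_n/2}\asymp n^{-\delta/2}$, so the first summand is $Cn^{-\delta/2}$, and adding the two contributions yields the claimed bound.

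There is no real obstacle: once Lemma \ref{Lemma Cn2} is in hand, the corollary is a mechanical combination of that lemma with the $L^2$-closeness of $R_n$ and $C_n$ from Lemma \ref{Lemma 2.5}. The only mildly delicate point is confirming that Lemmas \ref{Lemma 2.4b} and \ref{Lemma 2.5} are applicable with the unbounded choice $F=\rr$, which succeeds because $\int_\rr f^2<\infty$ under hypothesis $(f)$ and the right-hand sides involve only $\|f\|_\infty$, $\|f\|_2$ and $\|\Phi\|_1,\|\Phi\|_2$.
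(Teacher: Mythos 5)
Your proof is correct and follows exactly the route the paper intends: the paper states the corollary as a direct consequence of Lemma \ref{Lemma Cn2} together with Lemma \ref{Lemma 2.5}, and your triangle-inequality split through $C_n^2$ plus the Cauchy--Schwarz estimate on $2^{j_n}\int(R_n^2-C_n^2)$ is precisely the calculation that makes that remark rigorous. The only small simplification available is that the uniform bound $2^{j_n}\int_{\rr^2}C_n^2\le C$ can be read off Lemma \ref{Lemma Cn2} itself rather than invoking Lemma \ref{Lemma 2.4b}, but both give the same order $n^{-\delta/2}$ for the cross term.
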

The proofs of Lemmas $\ref{Lemma 2.4}$ and $\ref{Lemma Cn2}$ are
provided in the appendix.

\section{Tail Estimation}

  The goal of this section is to obtain exponential inequalities for
$W_n(F)$, where F satisfies $\eqref{eq: F}$ and also for
$W_n(\rr)-W_{n,m}(\rr)$. We assume throughout this section that
$\phi$ satisfies (S1), and $K$ is associated with $\phi$ given by
$\eqref{eq: K}$.

Set, for $m<n$,
\begin{equation} \label{eq: Wnm}
W_{n,m}(\rr):=\int_\rr\left[\left(\sum_{m<i\leq
n}\bar{K}_n(t,X_i)\right)^2-\ee\left(\sum_{m<i\leq
n}\bar{K}_n(t,X_i)\right)^2\right]dt
\end{equation}
and \begin{equation} \label{eq: Hn} H_n(x,y):=\int_\rr
\bar{K}_n(t,x)\bar{K}_n(t,y)dt, \ \ H_{n,F}(x,y)=\int_F
\bar{K}_n(t,x)\bar{K}_n(t,y)dt.
\end{equation}
With this notation,
%(3.3)
\begin{equation}  \label{eq: 3.3}
U_n(F)=\sum_{1\leq i\neq j\leq n}H_{n,F}(X_i,X_j),\
L_n(F)=\sum_{i=1}^n\left(H_{n,F}(X_i,X_i)-\ee
H_{n,F}(X_i,X_i)\right),
\end{equation}
and
%(3.4)
\begin{equation}
\begin{split}   \label{eq: 3.4}
&\quad W_n(\rr)-W_{n,m}(\rr)\\
&=2\sum_{i=1}^m\sum_{j=m+1}^n H_n(X_i,X_j)+\sum_{1\leq i\neq j\leq
m}H_n(X_i,X_j)+\sum_{i=1}^m \left(H_n(X_i,X_i)-\ee
H_n(X_i,X_i)\right).
\end{split}
\end{equation}
Bernstein's inequality (e.g., de la Pe\~{n}a and Gin\'{e}, 1999)
says that for centered, i.i.d. random variables $\xi_i$, if
$\|\xi_i\|_\infty\leq c<\infty$ and $\sigma^2=E\xi_i^2$, then
\begin{equation}
{\rm
Pr}\left\{\sum_{i=1}^m\xi_i>t\right\}\leq\exp\left(-\frac{t^2}{2m\sigma^2+2ct/3}\right).
\end{equation}
Applying it to the 3rd term in the above equation, given Corollary
$\ref{var}$, and inequality $\eqref{eq: 2.17}$, we obtain
%(3.5)
\begin{equation}
\begin{split}  \label{eq: 3.5}
&\quad{\rm Pr}\left\{\left|\sum_{i=1}^m
\left(H_n(X_i,X_i)-\ee H_n(X_i,X_i)\right)\right|>\tau n2^{-{3\over 2}j_n}\right\}\\
&\leq2\exp\left(-\frac{\tau^2n^22^{-3j_n}}{8m2^{-2j_n}\|\Phi\|_2^4+{16\over
3}\tau n2^{-{5\over 2}j_n}\|\Phi\|_2^2}\right).
\end{split}
\end{equation}
The first two terms in $\eqref{eq: 3.4}$ are of U-statistics type.
They can be controlled by the following exponential inequality for
canonical U-statistics.
\begin{theorem}(Gin\'{e}, Lata{\l}a, Zinn, 2000)  \label{Theorem 3.1}
There exists a universal constant $L<\infty$ such that, if $h_{i,j}$
are bounded canonical kernels of two variables for the independent
random variables $(X_i^{(1)}$, $X_j^{(2)})$, $i,j=1,2,...,n$, and if
$A,B,C,D$ are as defined below, then
\begin{equation}    \label{eq: 3.6}
{\rm Pr}\left\{\left|\sum_{1\leq i,j\leq
n}h_{i,j}(X_i^{(1)},X_j^{(2)})\right|\geq x\right\}\leq
L\exp\left[-\frac{1}{L}\min\left(\frac{x^2}{C^2},\frac{x}{D},
\frac{x^{2/3}}{B^{2/3}},\frac{x^{1/2}}{A^{1/2}}\right)\right]
\end{equation}
for all $x>0$, where
\begin{equation}
\begin{split}   \label{eq: D}
D&=\|(h_{i,j})\|_{L^2\to L^2} \\
&:=\sup\left\{\ee\sum_{i,j}h_{i,j}(X_i^{(1)},X_j^{(2)})f_i(X_i^{(1)})g_j(X_j^{(2)}):
\ee\sum_if_i^2(X_i^{(1)})\leq1,\ee\sum_jg_j^2(X_j^{(2)})\leq1\right\},
\end{split}
\end{equation}
\begin{equation}
C^2=\sum_{i,j}\ee h_{i,j}^2(X_i,X_j),
\end{equation}
\begin{equation}
B^2=\max_{i,j}\left[\left\|\sum_i\ee
h_{i,j}^2(X_i^{(1)},y)\right\|_\infty, \left\|\sum_j\ee
h_{i,j}^2(x,X_j^{(2)})\right\|_\infty\right]
\end{equation}
and
\begin{equation}
A=\max_{i,j}\|h_{i,j}\|_\infty.
\end{equation}
\end{theorem}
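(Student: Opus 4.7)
The plan is to derive this four-regime tail bound by the now-standard scheme of decoupling, Rademacher symmetrization, and sharp moment estimates for second-order chaoses, converted to a tail bound via Markov's inequality. First I would apply the decoupling inequality of de la Pe\~na (extending Kwapie\'n's result for canonical kernels) to reduce the problem to the decoupled $U$-statistic with two independent copies $(X_i^{(1)})$ and $(X_j^{(2)})$; the canonical property of the $h_{i,j}$ is exactly what enables decoupling at the cost of a universal multiplicative constant. I would then introduce Rademacher multipliers $\epsilon_i, \epsilon_j'$ on each coordinate via the standard symmetrization trick, again at the cost of a universal constant.

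Conditionally on the $X$-variables, the randomized sum $\sum_{i,j}\epsilon_i\epsilon_j' h_{i,j}(X_i^{(1)},X_j^{(2)})$ is a Rademacher chaos of order two with coefficient matrix $(h_{i,j}(X_i^{(1)},X_j^{(2)}))$. For such chaoses, the hypercontractivity estimates of Bonami--Beckner, sharpened by Lata{\l}a, yield $L^p$ bounds whose four terms are governed by four natural norms of the coefficient matrix, with growth rates $p^{1/2}$ (Hilbert--Schmidt), $p$ (operator $\ell^2\!\to\!\ell^2$ norm), $p^{3/2}$ (row/column sup-$\ell^2$ norm), and $p^2$ (entrywise sup). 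Taking expectation over the $X$-variables, each random norm is dominated by its deterministic counterpart $C$, $D$, $B$, $A$ in the statement, so one obtains an unconditional $L^p$ bound with these four scales.

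Finally, optimizing the resulting moment bound over $p$ via Markov's inequality produces four distinct tail regimes, one per norm: a Gaussian regime in $C$, a sub-exponential regime in $D$, and Weibull regimes of orders $2/3$ and $1/2$ in $B$ and $A$, respectively. Combining them yields the minimum inside the exponential in \eqref{eq: 3.6}. The main obstacle is the $D$-scale: naive iteration of Bernstein's inequality after conditioning on one coordinate produces only the supremum of conditional variances, which is strictly weaker than the $\ell^2\!\to\!\ell^2$ operator norm $D$ as defined in \eqref{eq: D}. Recovering the correct linear-in-$p$ dependence for this term requires either hypercontractivity of Rademacher chaoses or Talagrand's moment estimates for Gaussian chaoses combined with the comparison between Rademacher and Gaussian chaos, together with a careful unconditioning argument to transfer the chaos estimate from the conditional setting to the unconditional one; this last step is where most of the technical work lies.
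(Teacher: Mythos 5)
The paper does not prove Theorem \ref{Theorem 3.1}; it is imported verbatim from Gin\'{e}, Lata{\l}a and Zinn (2000) and used as a black box, so there is no in-paper proof for your sketch to be measured against. Your outline does capture the broad strategy of the original Gin\'{e}--Lata{\l}a--Zinn argument: decoupling via de la Pe\~{n}a, Rademacher randomization, conditional moment estimates for a second-order chaos, and conversion to a tail bound by optimizing over $p$ in Markov's inequality.

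There is, however, a substantive misstep in where the four scales are claimed to come from. For a \emph{fixed} coefficient matrix $a=(a_{ij})$, the sharp $L^p$ bound for the decoupled Rademacher (or Gaussian) chaos $\sum_{i,j}a_{ij}\varepsilon_i\varepsilon_j'$ has only \emph{two} scales, of order $p^{1/2}\|a\|_{\mathrm{HS}}+p\,\|a\|_{\ell^2\to\ell^2}$ (Hanson--Wright type); hypercontractivity and Lata{\l}a's estimates do not by themselves produce the row-sup and entrywise-sup terms for an order-two chaos. In the actual Gin\'{e}--Lata{\l}a--Zinn proof, $B$ and $A$ arise at the \emph{unconditioning} stage: after conditioning on the $X$'s and applying the two-scale chaos bound, one must still bound the $p$-th moments of the random Hilbert--Schmidt and operator norms of $\bigl(h_{ij}(X_i^{(1)},X_j^{(2)})\bigr)$, and it is the Rosenthal/Bernstein-type estimates for these random norms that produce the $B^{2/3}$ and $A^{1/2}$ regimes. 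So the unconditioning is not the mild ``domination'' step your sketch suggests; it is itself the source of two of the four regimes. Correspondingly, the $D$-term, which you single out as the main obstacle, is in fact the one that falls out directly from the conditional chaos bound once decoupling and symmetrization are in place.
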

Theorem $\ref{Theorem 3.1}$ also holds if the decoupled U-statistic
$\sum_{1\leq i,j\leq n}h_{i,j}(X_i^{(1)},X_j^{(2)})$ is replaced by
the undecoupled U-statistic $\sum_{1\leq i\neq j\leq
n}h_{i,j}(X_i,X_j)$. We will take $h_{i,j}=H_{n,F,i,j}=H_{n,F}$,
calculate the constants $A,B,C,D$ in Theorem $\ref{Theorem 3.1}$,
and apply it to $\sum_{1\leq i\neq j\leq m}H_{n,F}(X_i,X_j)$.
$\eqref{eq: 2.16}$ gives
\begin{equation} \label{eq: AB bound}
A\leq 4\cdot2^{-j_n}\|\Phi\|_2^2, \ \ B^2\leq
16m\cdot2^{-2j_n}\|\Phi\|_2^4.
\end{equation}
By Lemmas $\ref{Lemma 2.4b}$ and $\ref{Lemma 2.5}$, for $n$ large
enough depending on $F$,
\begin{equation}   \label{eq: C bound}
C^2\leq2m^2\cdot2^{-3j_n}\|\Phi\|_1^2\|\Phi\|_2^2\int_F f^2(x)dx.
\end{equation}
If $f$ satisfies condition $(f)$ and $\phi$ satisfies condition
(S1), the bound on $D$ can be calculated  by following the proof in
the kernel case and making obvious modifications there.
\begin{equation}   \label{eq: D bound}
D\leq4m2^{-2j_n}\|f\|_\infty\|\Phi\|_1^2.
\end{equation}

%Proposition 3.4
\begin{proposition}   \label{prop 3.4}
Let $X_i$ be i.i.d. with density f satisfying condition (f). Let F
be a measurable subset of \ $\rr$ satisfying condition $\eqref{eq:
F}$. $\phi$ satisfies $(S1)$ and K is the projection kernel
associated with $\phi$. $2^{-j_n}\to 0$. Then there exist constants
$\kappa_0$ (depending on f and $\phi$) and $n_0$(depending on F, f,
$\phi$ and the sequence $\{j_n\}$) such that, for all $\tau>0$ and
for all $n\geq n_0$, $0\leq m<n$,
%(3.12)
\begin{equation}
\begin{split} \label{eq: 3.12}
&\quad{\rm Pr}\left\{\left|\sum_{1\leq i\neq j\leq m}H_{n,F}(X_i,X_j)\right|\geq\tau n2^{-{3\over 2}j_n}\right\}\\
&\leq\kappa_0\exp\left(-{1\over\kappa_0}\min\left[\frac{\tau^2n^2}{m^2\int_Ff^2(x)dx},\frac{\tau
n}{m2^{-j_n/2}},\frac{\tau^{2/3}n^{2/3}2^{-j_n/
3}}{m^{1/3}},\tau^{1/2}n^{1/2}2^{-j_n/4}\right]\right)
\end{split}
\end{equation}
and
%(3.13)
\begin{equation}
\begin{split} \label{eq: 3.13}
&\quad{\rm Pr}\left\{\left|\sum_{i=1}^m\sum_{j=m+1}^n H_{n,F}(X_i,X_j)\right|\geq\tau n2^{-{3\over 2}j_n}\right\}\\
&\leq\kappa_0\exp\left(-{1\over\kappa_0}\min\left[\frac{\tau^2n^2}{m(n-m)\int_Ff^2(x)dx},
\frac{\tau n}{\sqrt{m(n-m)}2^{-j_n/2}},\right.\right.\\
&\quad\left.\left.\frac{\tau^{2/3}n^{2/3}2^{-j_n/
3}}{(m\vee(n-m))^{1/3}},\tau^{1/2}n^{1/2}2^{-j_n/ 4}\right]\right).
\end{split}
\end{equation}
\end{proposition}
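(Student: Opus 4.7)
The plan is a direct application of Theorem \ref{Theorem 3.1} to each of the two sums in the proposition, using the kernel $h_{i,j} = H_{n,F}$ whose four constants $A, B, C, D$ have already been bounded in the discussion leading up to the statement. First I would verify canonicity: since $\ee[\bar K_n(t, X)] = 0$ for every $t$, Fubini yields $\ee[H_{n,F}(X, y)] = \int_F \bar{K}_n(t, y)\,\ee[\bar{K}_n(t, X)] dt = 0$, so $H_{n,F}$ is degenerate for i.i.d.\ observations. This legitimates Theorem \ref{Theorem 3.1} in its undecoupled form for \eqref{eq: 3.12} and in its decoupled form for \eqref{eq: 3.13}, since in the latter the two index blocks $\{1,\dots,m\}$ and $\{m+1,\dots,n\}$ already involve disjoint (hence independent) samples.

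For \eqref{eq: 3.12}, I would substitute the quoted constants \eqref{eq: AB bound}, \eqref{eq: C bound} and \eqref{eq: D bound} into \eqref{eq: 3.6} with the choice $x = \tau n 2^{-3j_n/2}$. A short computation gives
\begin{align*}
\frac{x^2}{C^2} &\asymp \frac{\tau^2 n^2}{m^2 \int_F f^2(y)dy}, \qquad \frac{x}{D} \asymp \frac{\tau n}{m\, 2^{-j_n/2}},\\
\frac{x^{2/3}}{B^{2/3}} &\asymp \frac{\tau^{2/3} n^{2/3} 2^{-j_n/3}}{m^{1/3}}, \qquad \frac{x^{1/2}}{A^{1/2}} \asymp \tau^{1/2} n^{1/2} 2^{-j_n/4},
\end{align*}
which are exactly the four quantities appearing inside the minimum of \eqref{eq: 3.12}, with the universal multiplicative constants absorbed into $1/\kappa_0$.

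For \eqref{eq: 3.13} the constants require minor adjustment because of the two sample sizes $m$ and $n-m$: $A$ is unchanged; $B^2 \leq 16(m\vee(n-m))\cdot 2^{-2j_n}\|\Phi\|_2^4$ since $B^2$ takes the maximum over one sum of length $m$ and one of length $n-m$; $C^2 \leq 2 m(n-m) \cdot 2^{-3j_n}\|\Phi\|_1^2\|\Phi\|_2^2 \int_F f^2$; and $D \leq 4\sqrt{m(n-m)}\cdot 2^{-2j_n}\|f\|_\infty \|\Phi\|_1^2$, because the constraints $\ee\sum_{i\leq m} f_i^2 \leq 1$ and $\ee\sum_{j>m} g_j^2 \leq 1$ combine via Cauchy-Schwarz to $\|\sum_i f_i\|\cdot\|\sum_j g_j\| \leq \sqrt{m(n-m)}$ in $L^2(f\,dx)$. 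Substituting into \eqref{eq: 3.6} with the same $x = \tau n 2^{-3j_n/2}$ reproduces the four ratios in \eqref{eq: 3.13}.

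The only genuinely non-routine input is the bound \eqref{eq: D bound}, which the paper imports from the kernel case in (GM). I would reproduce it via Schur's test on the operator $(T\varphi)(x) = \int H_{n,F}(x,y)\varphi(y) f(y) dy$ acting on $L^2(f\,dx)$: expanding $H_{n,F}(x,y)$ as an integral of $\bar{K}_n(t,x)\bar{K}_n(t,y)$ and exploiting the majorization \eqref{eq: HKPT} with change of variables produces $\sup_x \int |H_{n,F}(x,y)| dy \leq 4\cdot 2^{-2j_n}\|\Phi\|_1^2$; Schur's test then gives $\|T\|_{L^2(f\,dx)\to L^2(f\,dx)} \leq 4\cdot 2^{-2j_n}\|f\|_\infty\|\Phi\|_1^2$, whence $D \leq m\|T\|$ (respectively $\sqrt{m(n-m)}\|T\|$). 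Once this estimate is in place, the rest of the argument is purely algebraic bookkeeping on the four ratios in Theorem \ref{Theorem 3.1}.
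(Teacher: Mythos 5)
Your proposal follows exactly the paper's proof: Proposition~\ref{prop 3.4} is obtained by plugging the bounds \eqref{eq: AB bound}, \eqref{eq: C bound} and \eqref{eq: D bound} into the Gin\'{e}--Lata{\l}a--Zinn inequality~\eqref{eq: 3.6} with $x=\tau n 2^{-3j_n/2}$ (and, for \eqref{eq: 3.13}, reading the two index blocks as two already-independent samples so the decoupled form applies directly). Your algebra for the four ratios, and the $m\vee(n-m)$, $m(n-m)$ and $\sqrt{m(n-m)}$ adjustments for the two-block case, all check out and match what one gets from the paper's cited constants.

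The one place where you go beyond the paper is in trying to re-derive the operator-norm bound~\eqref{eq: D bound} (which the paper simply imports from (GM)) via Schur's test. There is a small technical slip there: the intermediate inequality $\sup_x \int_\rr |H_{n,F}(x,y)|\,dy \leq 4\cdot 2^{-2j_n}\|\Phi\|_1^2$ is in fact false, because $\bar K_n(t,y)=K_n(t,y)-\ee K_n(t,X)$ contains a term that is constant in $y$, so $\int_\rr |\bar K_n(t,y)|\,dy=\infty$ whenever $\ee K_n(t,X)\neq 0$. The fix is to keep the density in the Schur test all along (the operator lives on $L^2(f\,dx)$, so the relevant row/column sums are $\int|H_{n,F}(x,y)|f(y)\,dy$, which is bounded by $4\cdot 2^{-2j_n}\|\Phi\|_1^2\|f\|_\infty$ exactly as you compute in the next step). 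The final estimate $\|T\|_{L^2(f\,dx)\to L^2(f\,dx)}\leq 4\cdot 2^{-2j_n}\|\Phi\|_1^2\|f\|_\infty$ is therefore correct; only the unweighted intermediate statement is wrong. Since this part is supplementary to the paper's argument, it does not affect the correctness of the proposition's proof itself.
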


\begin{proof}
Gathering Theorem $\ref{Theorem 3.1}$, $\eqref{eq: AB bound}$,
$\eqref{eq: C bound}$ and $\eqref{eq: D bound}$, we get $\eqref{eq:
3.12}$. $\eqref{eq: 3.13}$ can be obtained in a similar way.
\end{proof}

Using this and $\eqref{eq: 3.5}$ for the diagonal $L_n(F)$, we also
have
%Proposition 3.3
\begin{proposition}   \label{prop 3.3}
Under the same hypotheses of Proposition $\ref{prop 3.4}$ on $f$,
$\phi$ and $\{j_n\}$, there exist constants $\kappa_0$ (depending on
$\phi$ and $f$) and $n_0$(depending on F, f, $\phi$ and the sequence
$\{j_n\}$) such that, for all $\tau>0$ and for all $n\geq n_0$,
%(3.10)
\begin{equation}
\begin{split} \label{eq: 3.10}
&\quad{\rm Pr}\left\{|W_n(F)|\geq\tau n2^{-{3\over 2}j_n}\right\}\\
&\leq\kappa_0\exp\left(-{1\over\kappa_0}\min\left[\frac{\tau^2}{\int_F
f^2(x)dx},2^{j_n/2}\tau,\tau^{2/3}n^{1/3}2^{-{j_n\over
3}},\tau^{1/2}n^{1/2}2^{-{j_n\over 4}},\tau^2n2^{-j_n},\tau
n2^{-{j_n\over 2}}\right]\right).
\end{split}
\end{equation}
In particular, if the sequence $2^{j_n}$ satisfies condition (B1)
and $\tau=\eta\sqrt{\log\log n}$, the first term dominates. For
every $\eta>0$ there exist $\kappa_0$ and $n_0$ as above such that
%(3.11)
\begin{equation} \label{eq: 3.11}
{\rm Pr}\left\{|W_n(F)|\geq\eta n2^{-{3\over 2}j_n}\sqrt{\log\log
n}\right\}\leq\kappa_0\exp\left(-\frac{\eta^2\log\log
n}{\kappa_0\int_F f^2(x)dx}\right)
\end{equation}
for all $n\geq n_0$.
\end{proposition}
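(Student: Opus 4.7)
The plan is to decompose $W_n(F)=U_n(F)+L_n(F)$ via \eqref{eq: Wn dec}, bound the off-diagonal $U$-statistic piece and the diagonal sum separately using the exponential inequalities already assembled, and then combine them by a union bound. Entries one through four of the minimum in \eqref{eq: 3.10} will come from $U_n(F)$, while entries five and six will come from $L_n(F)$.

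For $U_n(F)=\sum_{1\leq i\neq j\leq n}H_{n,F}(X_i,X_j)$, I would apply Proposition~\ref{prop 3.4}, equation \eqref{eq: 3.12}, with the parameter $m$ set equal to $n$. The four ratios $\tau^2 n^2/(m^2\int_F f^2)$, $\tau n/(m\,2^{-j_n/2})$, $\tau^{2/3}n^{2/3}2^{-j_n/3}/m^{1/3}$ and $\tau^{1/2}n^{1/2}2^{-j_n/4}$ then collapse to precisely the first four entries of the minimum in \eqref{eq: 3.10}. For $L_n(F)=\sum_{i=1}^n\bigl(H_{n,F}(X_i,X_i)-\ee H_{n,F}(X,X)\bigr)$, I would apply Bernstein's inequality with per-summand variance $\sigma^2\leq 4\cdot 2^{-2j_n}\|\Phi\|_2^4$ from Corollary~\ref{var} and uniform bound $8\cdot 2^{-j_n}\|\Phi\|_2^2$ from \eqref{eq: 2.17}; this is the $m=n$ case of \eqref{eq: 3.5}, and reading off the two regimes of the Bernstein denominator produces exponents of order $\tau^2 n\,2^{-j_n}$ and $\tau n\,2^{-j_n/2}$, i.e.\ the last two entries of the minimum. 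A union bound on $\{|U_n(F)|\geq \tau n\,2^{-3j_n/2}/2\}$ and $\{|L_n(F)|\geq \tau n\,2^{-3j_n/2}/2\}$, with the factor $1/2$ absorbed into $\kappa_0$, delivers \eqref{eq: 3.10}. The dependence of $n_0$ on $F$ comes from the condition ``$n$ large enough depending on $F$'' already present in Lemmas~\ref{Lemma 2.4b}--\ref{Lemma 2.5} used to derive \eqref{eq: C bound}.

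For the specialization \eqref{eq: 3.11}, I would substitute $\tau=\eta\sqrt{\log\log n}$ into \eqref{eq: 3.10} and use (B1), under which $2^{j_n}\asymp n^\delta$ with $\delta\in(0,1/3)$. The first entry is then of order $\log\log n$, while entries two through six are of orders
\begin{equation*}
n^{\delta/2}(\log\log n)^{1/2},\ n^{(1-\delta)/3}(\log\log n)^{1/3},\ n^{(2-\delta)/4}(\log\log n)^{1/4},\ n^{1-\delta}\log\log n,\ n^{1-\delta/2}(\log\log n)^{1/2},
\end{equation*}
respectively. Since $\delta\in(0,1/3)$, every one of the five exponents of $n$ here is strictly positive, so each of entries two through six diverges faster than $\log\log n$, and for $n\geq n_0=n_0(\eta,F,f,\phi,\{j_n\})$ the first entry attains the minimum. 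This produces \eqref{eq: 3.11}.

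The main obstacle is essentially absent: every ingredient (Corollary~\ref{var}, \eqref{eq: 2.17}, and the heavy Gin\'{e}--Lata{\l}a--Zinn bound packaged in Proposition~\ref{prop 3.4}) is already in place. What remains is arithmetic bookkeeping, namely verifying the substitution $m=n$ in \eqref{eq: 3.12} and checking that each of the five exponents of $n$ arising in the comparison above is positive under $\delta<1/3$; both are routine.
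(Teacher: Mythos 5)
Your proof is correct and follows exactly the route the paper intends: bound $U_n(F)$ via Proposition~\ref{prop 3.4} (which, with the number of summands taken to be $n$, yields the first four entries of the minimum) and bound the diagonal $L_n(F)$ via the Bernstein bound \eqref{eq: 3.5} (yielding the last two), then union bound, and the specialization \eqref{eq: 3.11} is the straightforward order check you carried out. The only microscopic quibble is that Proposition~\ref{prop 3.4} is formally stated for $0\leq m<n$, so the ``$m=n$'' case is a trivial but unstated extension; this is clearly what the paper means and the argument for $A,B,C,D$ applies verbatim.
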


Now the three terms in the decomposition of $W_n(\rr)-W_{n,m}(\rr)$
in $\eqref{eq: 3.4}$ can be bounded. The first two are of the
U-statistics type, so Proposition $\ref{prop 3.4}$ is used to obtain
the estimation. The last one is a sum of mean zero i.i.d. r.v.'s and
can be dealt with by $\eqref{eq: 3.5}$.
\begin{lemma}  \label{Lemma Wn-Wnm}
Under the same hypotheses of Proposition $\ref{prop 3.4}$ on $f$,
$\phi$ and $\{j_n\}$, there exist a constant $\kappa_0$ (depending
on f and $\phi$) and $\eta>0$ such that, for all $\epsilon>0$,
$\sigma>0$, if $n$ is large enough (depending on $f$, $\phi$ and
$\{j_n\}$), and $m$ fixed is such that $0\leq m<n$,
\begin{equation}
{\rm Pr}\left\{|W_n(\rr)-W_{n,m}(\rr)|\geq\epsilon\sigma
n2^{-3j_n/2}\sqrt{2\log\log
n}\right\}\leq\kappa_0\exp\left(-\frac{\epsilon^2n^\eta}{\kappa_0}\right).
\end{equation}
\end{lemma}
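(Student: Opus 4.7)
The plan is to start from the three-term decomposition \eqref{eq: 3.4},
$$W_n(\rr)-W_{n,m}(\rr) = T_1 + T_2 + T_3,$$
where $T_1:=2\sum_{i=1}^{m}\sum_{j=m+1}^{n}H_n(X_i,X_j)$, $T_2:=\sum_{1\le i\ne j\le m}H_n(X_i,X_j)$ and $T_3:=\sum_{i=1}^{m}(H_n(X_i,X_i)-\ee H_n(X_i,X_i))$, and to bound each of the three probabilities ${\rm Pr}\{|T_k|\ge \tfrac13 \epsilon\sigma n 2^{-3j_n/2}\sqrt{2\log\log n}\}$ by a constant multiple of $\exp(-\epsilon^2 n^\eta/\kappa_0)$. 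A union bound then yields the lemma after absorbing the factor $3$ into $\kappa_0$.

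$T_1$ and $T_2$ are both of $U$-statistic type with kernel $H_n=H_{n,\rr}$, so the natural tool is Proposition \ref{prop 3.4} applied with $F=\rr$: I would use \eqref{eq: 3.13} for $T_1$ (the cross sum between the disjoint index blocks $\{1,\ldots,m\}$ and $\{m+1,\ldots,n\}$) and \eqref{eq: 3.12} for $T_2$ (the off-diagonal sum on the first $m$ indices), both times with $\tau = c\,\epsilon\sigma\sqrt{2\log\log n}$ for a small absolute constant $c$. For $T_3$, which is an i.i.d.\ centred bounded sum with per-summand bound and variance already controlled by \eqref{eq: 2.17} and Corollary \ref{var}, I would apply the Bernstein estimate \eqref{eq: 3.5} with the same $\tau$.

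It remains to verify that, under (B1) so that $2^{-j_n}\asymp n^{-\delta}$ with $\delta\in(0,1/3)$, every entry in the minima appearing in the exponents of \eqref{eq: 3.12}, \eqref{eq: 3.13} and \eqref{eq: 3.5} is at least $\epsilon^2 n^\eta/\kappa_0$ for some fixed $\eta>0$. The ``sub-exponential'' entries $\tau^{1/2}n^{1/2}2^{-j_n/4}$, $\tau^{2/3}n^{2/3}2^{-j_n/3}/(m\vee(n-m))^{1/3}$, $\tau n 2^{-j_n/2}$ and $\tau^2 n 2^{-j_n}$ each grow as a positive power of $n$ once $\tau=c\epsilon\sqrt{2\log\log n}$ is substituted, so they pose no difficulty. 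The delicate members are the ``sub-Gaussian'' ones $\tau^2 n^2/(m(n-m)\|f\|_2^2)$ in \eqref{eq: 3.13} and $\tau^2 n^2/(m^2\|f\|_2^2)$ in \eqref{eq: 3.12}, which can shrink to $O(\log\log n)$ when $m$ is comparable to $n/2$ or $n$.

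The technical core of the argument, and the main obstacle, is the observation that exactly when a sub-Gaussian term becomes this small, the neighbouring sub-exponential term $\tau n/(\sqrt{m(n-m)}\,2^{-j_n/2})$ or $\tau n/(m\,2^{-j_n/2})$ is already smaller and hence binds in the min; and it is of order $\tau\cdot 2^{j_n/2}\asymp\epsilon\sqrt{\log\log n}\,n^{\delta/2}$. Choosing any $\eta\in(0,\delta/2)$ and using $\epsilon\sqrt{\log\log n}\,n^{\delta/2}\ge \epsilon^2 n^\eta/\kappa_0$ for $n$ large enough then gives the claimed bound. Everything else is direct application of the exponential estimates already assembled in Section 3.
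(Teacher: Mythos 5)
Your overall plan — decompose $W_n(\rr)-W_{n,m}(\rr)$ into the three terms of \eqref{eq: 3.4}, hit the two $U$-statistic pieces with Proposition \ref{prop 3.4} (with $F=\rr$ and $\tau\asymp\epsilon\sigma\sqrt{2\log\log n}$), hit the diagonal piece with the Bernstein bound \eqref{eq: 3.5}, and conclude by a union bound — is exactly what the paper does; it only sketches this step and does not spell out the exponent bookkeeping. So the structure is right.

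The ``technical core'' paragraph, however, is mistaken, in two related ways. First, the rescue you propose does not happen: with $m\asymp n/2$, the sub-Gaussian entry $\tau^2n^2/(m(n-m)\int f^2)\asymp\tau^2\asymp\epsilon^2\log\log n$, while the sub-exponential entry $\tau n/(\sqrt{m(n-m)}\,2^{-j_n/2})\asymp\tau\,2^{j_n/2}\asymp\epsilon\sqrt{\log\log n}\,n^{\delta/2}$. The latter is \emph{larger}, not smaller, so the sub-Gaussian term is the minimum, the exponent is only $O(\log\log n)$, and the bound $\kappa_0\exp(-\epsilon^2n^\eta/\kappa_0)$ would simply be false in that regime. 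Second, and more to the point, this regime cannot occur here: the lemma fixes $m$ and sends $n\to\infty$, so $m/n\to 0$. With $m$ fixed, after substituting $\tau\asymp\epsilon\sqrt{2\log\log n}$ and $2^{-j_n}\asymp n^{-\delta}$ every entry of the minima in \eqref{eq: 3.12}, \eqref{eq: 3.13} and the Bernstein exponent in \eqref{eq: 3.5} grows as a positive power of $n$; the slowest-growing ones are the $\tau^{2/3}$ and $\tau^{1/2}$ entries, of order $\epsilon^{2/3}(\log\log n)^{1/3}n^{1/3-\delta/3}$ and $\epsilon^{1/2}(\log\log n)^{1/4}n^{1/2-\delta/4}$ respectively, and (for $\epsilon\le 1$, the range that matters, since the probability is monotone in $\epsilon$) these dominate $\epsilon^2 n^\eta/\kappa_0$ for any $\eta<1/3-\delta/3$ once $n$ is large. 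That is the observation the proof actually needs; the supposed ``neighbouring sub-exponential term saves the sub-Gaussian one'' mechanism is not how it works and should be dropped.
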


\section{Moderate Deviations}

In this section, we'll prove a moderate deviation result for
$W_n([-M,M])$. This statistic can be approximated by a Gaussian
chaos due to the Koml\'{o}s-Major-Tusn\'{a}dy (KMT) theorem and the
Dvoretzky-Kiefer-Wolfowitz (DKW) inequalities. Then a moderate
deviation result in (GM) is used for the Gaussian chaos. $\phi$
satisfies both (S1) and (S2).

Let $F_n(t):={1\over n}\sum_{i=1}^n1(X_i\leq t)$ and $B_n$ be a
sequence of Brownian bridges. For all $x\in \rr$, set
\begin{equation}
\begin{split} \label{eq: E_n}
E_n(x):&=\sqrt{n2^{-j_n}}[f_{n,K}(x)-\ee f_{n,K}(x)]
=\sqrt{2^{j_n}\over n}\sum_{i=1}^n[K(2^{j_n}x,2^{j_n}X_i)-\ee K(2^{j_n}x,2^{j_n}X)]\\
&=\sqrt{n2^{j_n}}\int_\rr K(2^{j_n}x,2^{j_n}t)d[F_n(t)-F(t)].
\end{split}
\end{equation}
Let $K_{n,x}(t):=K(2^{j_n}x,2^{j_n}t)$ and
$\displaystyle{\mu_{K_{n,x}}(t)}$ be the Borel measure associated
with $K_{n,x}(t)$. Define the Gaussian process
\begin{equation}
\Gamma_n(x):=2^{j_n/2}\int_\rr[B_n(F(x))-B_n(F(t))]d\mu_{K_{n,x}}(t).
%=2^{j_n/2}\int_\rr K(2^{j_n}x,2^{j_n}t)dB_n(F(t))
\end{equation}
We want to approximate
\begin{equation}  \label{eq: Wn[-M,M]}
{2^{3j_n/2}\over n}W_n([-M,M])=2^{j_n/2}\int_{-M}^M
\left[(E_n(t))^2-E(E_n(t))^2)\right]dt
\end{equation}
by a Gaussian chaos:
\begin{equation}  \label{eq: Gaussian}
2^{j_n/2}\int_{-M}^M
\left[(\Gamma_n(t))^2-E((\Gamma_n(t))^2)\right]dt.
\end{equation}
In order to apply the KMT theorem, we need an integration by parts
formula for $E_n(x)$. This requires us to check two conditions: (i)
$F_n(t)-F(t)$ and $K_{n,x}(t)$ are in the space $NBV$, where $NBV$
is defined by
\begin{equation}
NBV=\{G {\rm \ \ is \ \ of \ \ bounded \ \ variation},\ \ G {\rm\ \
is \ \ right \ \ continuous \ \ and} \  G(-\infty)=0\}.
\end{equation}
(ii) Almost surely, for fixed $N$, there are no points in $[-N,N]$
where $F_n(t)-F(t)$ and $K_{n,x}(t)$ are both discontinuous.

For any $m\in\nn$, let $\{-\infty<t_0<...<t_m=t\}$ be a partition
over $(-\infty,t)$. Then
\begin{equation}
\begin{split}
\sum_{l=1}^m|K_{n,x}(t_l)-K_{n,x}(t_{l-1})|
&\leq\sum_k|\phi(2^{j_n}x-k)|\sum_{l=1}^m|\phi(2^{j_n}t_l-k)-\phi(2^{j_n}t_{l-1}-k)|\\
&\leq\sum_k|\phi(2^{j_n}x-k)|\|\phi(2^{j_n}\cdot-k)\|_v.
\end{split}
\end{equation}
Since $\phi$ satisfies $\eqref{eq: phi}$ and (S2), we have, for
almost every $x$,
\begin{equation}
\|K_{n,x}\|_v\leq\sum_k|\phi(2^{j_n}x-k)|\|\phi\|_v :=C_\phi,
\end{equation}
where $C_\phi$ is a constant that depends only on the scaling
function $\phi$. The other conditions in (i) are obvious. To verify
(ii), we note that $K_{n,x}(t)$ could only have discontinuities at
dyadic points whereas $F_n(t)-F(t)$ could only have discontinuities
at $X_i$, $1\leq i\leq n$.

Then we apply an integration by parts formula (Ex. 3.34, Folland
1999) to the integral $\int_{[-N,N]} K_{n,x}(t)d[F_n(t)-F(t)]$ and
let $N\to\infty$. By dominated convergence, this gives
\begin{equation}
\int_\rr
K_{n,x}(t)d[F_n(t)-F(t)]+\int_\rr(F_n(t)-F(t))d\mu_{K_{n,x}}(t)=0.
\end{equation}
Moreover, since $\int_\rr d\mu_{K_{n,x}}(t)=0$,
\begin{equation}
E_n(x)=\sqrt{n2^{j_n}}\int_\rr[F(t)-F_n(t)-(F(x)-F_n(x))]d\mu_{K_{n,x}}(t).
\end{equation}
Now we are able to bound the difference between $\eqref{eq:
Wn[-M,M]}$ and $\eqref{eq: Gaussian}$. We set $\alpha_n(t):=\sqrt
n\left[F_n(t)-F(t)\right]$ and
$D_n:=\sup_{-\infty<t<\infty}|\alpha_n(t)-B_n(F(t))|$. We have
\begin{equation}
\begin{split} \label{eq: DnM}
D_n(M):&=\left|{2^{3j_n/2}\over n}W_n([-M,M])-2^{j_n/2}\int_{-M}^M
\left((\Gamma_n(t))^2-E((\Gamma_n(t))^2)\right)dt\right|\\
&\leq2^{j_n}\cdot4MD_nC_\phi\esssup_x(|E_n(x)|+|\Gamma_n(x)|)\\
&\leq2^{3j_n/2}8MD_n(\|\alpha_n\|_{\infty}+\|B_n\|_{\infty})C_\phi^2.
\end{split}
\end{equation}
We use the KMT theorem for $D_n$ and the DKW inequalities for
$\|\alpha_n\|_{\infty}$ and $\|B_n\|_{\infty}$.
\begin{theorem}(Koml\'{o}s, Major, Tusn\'{a}dy, 1975)
There exists a probability space $(\Omega,\mathcal {A},P)$ with
i.i.d random variables $X_1,X_2,...$, with density f and a sequence
of Brownian bridges $B_1,B_2,...$, such that, for all $n\geq1$ and
$x\in \rr$,
\begin{equation}   \label{eq: D_n}
{\rm Pr}\left\{D_n\geq n^{-1/2}(a\log n+x)\right\}\leq b\exp(-cx),
\end{equation}
where a,b and c are positive constants that do not depend on n, x or
f.
\end{theorem}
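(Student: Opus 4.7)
The plan is to recognize this as the classical Koml\'{o}s--Major--Tusn\'{a}dy (KMT) strong approximation theorem and reduce to the uniform case via the quantile transform. Setting $U_i := F(X_i)$ gives i.i.d.\ Uniform$(0,1)$ random variables whose empirical distribution function $G_n$ satisfies $\sqrt{n}(G_n(F(t))-F(t)) = \alpha_n(t)$, so with $B_n$ reinterpreted as a Brownian bridge on $[0,1]$, the supremum $D_n=\sup_t|\alpha_n(t)-B_n(F(t))|$ coincides with $\sup_{s\in[0,1]}|\sqrt{n}(G_n(s)-s)-B_n(s)|$. This reduces the problem to constructing, on a single probability space, a coupling of the uniform empirical process with a Brownian bridge whose sup-norm discrepancy satisfies the exponential bound \eqref{eq: D_n}, now with constants completely independent of $f$.

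The core construction is a dyadic, hierarchical matching. I would partition $[0,1]$ into nested dyadic intervals $I_{k,j}=[j2^{-k},(j+1)2^{-k}]$ and, conditionally on the total sample size $n$, distribute the points successively between left and right halves using the conditional binomial law. At each split the binomial count is coupled with a matching Gaussian via Tusn\'{a}dy's lemma, which provides an explicit coupling of Binomial$(m,1/2)$ with Normal$(m/2,m/4)$ such that the two variables differ by at most $1+c|Z|$ for an absolute constant $c$ and $Z$ standard normal. Assembling the splits along the dyadic tree matches the finite-dimensional distributions of the empirical process with those of $B_n$ on the dyadic rationals, and a separate Bernstein-type control of binomial fluctuations on terminal intervals of length of order $1/n$ extends the coupling continuously to $[0,1]$.

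The main obstacle, and the origin of the $\log n$ factor in \eqref{eq: D_n}, is the simultaneous control of the errors over all $O(\log_2 n)$ dyadic levels. At level $k$ there are $2^k$ intervals, and the maximum Tusn\'{a}dy discrepancy over them is of order $\log 2^k$ with subgaussian tails; summing along the tree and maximizing produces the $a\log n$ term, while the exponential tail $b\exp(-cx)$ follows from standard maximal inequalities applied to the independent subgaussian contributions across levels. A final technical step is to reconcile the constructions across different sample sizes so that a single probability space carries $\{X_i\}_{i\geq1}$ and $\{B_n\}_{n\geq1}$ simultaneously, which is classical but delicate. Because the full argument is lengthy and entirely classical (Koml\'{o}s, Major and Tusn\'{a}dy, 1975; see also Bretagnolle and Massart, 1989 for a sharper form), I would simply invoke the result as stated rather than reconstruct the proof.
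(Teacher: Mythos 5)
Your proposal is essentially the same as the paper's treatment: the paper states this as a cited result from Koml\'{o}s, Major and Tusn\'{a}dy (1975) without reproducing the proof, and you likewise invoke the classical theorem after a sketch. Your outline of the underlying argument (quantile transform to the uniform case, dyadic hierarchical coupling via Tusn\'{a}dy's lemma, subgaussian control over the $O(\log n)$ dyadic levels) is accurate and consistent with the standard proof, but since the paper does not attempt a proof there is nothing to compare beyond the decision to cite.
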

The DKW inequalities (Dvoretzky, Kiefer, Wolfowitz, 1956; or see
Shorack and Wellner, 1986) give that, for every $z>0$,
\begin{equation} \label{eq: dkw}
{\textrm Pr}\left\{\|\alpha_n\|_{\infty}>z\right\}\leq2\exp(-2z^2),\
\ {\textrm Pr}\left\{\|B_n\|_{\infty}>z\right\}\leq2\exp(-2z^2).
\end{equation}
We arrive at the following proposition.
\begin{proposition}  \label{prop 4.2}
Assuming the scaling function $\phi$ satisfies (S1), (S2) and
${j_n}$ satisfies (B1), for any $\gamma>0$ there exists
$C_{M,\phi}>0$ such that
\begin{equation} \label{eq: Cmphi}
{\rm Pr}\left\{D_n(M)\geq\frac{C_{M,\phi}(\log
n)^2}{2^{-3j_n/2}\sqrt n}\right\}\leq n^{-\gamma}
\end{equation}
for all $n>n_0(\gamma)$.
\end{proposition}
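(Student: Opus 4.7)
The plan is to combine the deterministic bound
$$D_n(M)\leq 2^{3j_n/2}\cdot 8M\,D_n\bigl(\|\alpha_n\|_\infty+\|B_n\|_\infty\bigr)C_\phi^2$$
from display (\ref{eq: DnM}) with the KMT and DKW probabilistic estimates through a straightforward union bound. After dividing by $2^{3j_n/2}$, it suffices to prove that with probability at least $1-n^{-\gamma}$,
$$D_n\bigl(\|\alpha_n\|_\infty+\|B_n\|_\infty\bigr)\leq \frac{C\,(\log n)^2}{\sqrt{n}}$$
for an appropriate constant $C$; then the choice $C_{M,\phi}=16MC_\phi^2\,C$ will give (\ref{eq: Cmphi}).

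First I would apply the KMT estimate (\ref{eq: D_n}) with $x=((\gamma+2)/c)\log n$, yielding
$$\Pr\Bigl\{D_n\geq \tfrac{a'\log n}{\sqrt{n}}\Bigr\}\leq b\,n^{-\gamma-2}$$
where $a'=a+(\gamma+2)/c$. Next, I would apply the two DKW inequalities in (\ref{eq: dkw}) with $z=C_1\sqrt{\log n}$, choosing $C_1$ so that $2C_1^2\geq \gamma+2$; this gives
$$\Pr\bigl\{\|\alpha_n\|_\infty>C_1\sqrt{\log n}\bigr\}+\Pr\bigl\{\|B_n\|_\infty>C_1\sqrt{\log n}\bigr\}\leq 4\,n^{-\gamma-1}.$$

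Taking the union of these three bad events, the complement has probability at least $1-n^{-\gamma}$ for $n$ large enough depending only on $\gamma$ (absorbing the factors $b$ and $4$ into the extra $\log n$ factor in the exponent of $n$). On this good set,
$$D_n\bigl(\|\alpha_n\|_\infty+\|B_n\|_\infty\bigr)\leq \frac{a'\log n}{\sqrt{n}}\cdot 2C_1\sqrt{\log n}=\frac{2a'C_1\,(\log n)^{3/2}}{\sqrt{n}}\leq \frac{2a'C_1\,(\log n)^2}{\sqrt{n}},$$
which is even stronger than what is needed. Substituting into the bound for $D_n(M)$ and absorbing constants into $C_{M,\phi}$ completes the proof.

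There is no serious obstacle here: the analytic work (the integration-by-parts setup, verification of $NBV$ membership via (S2), and the derivation of the deterministic majorization (\ref{eq: DnM})) has already been carried out before the proposition's statement, so what remains is essentially bookkeeping of constants in the union bound. The only mild care needed is that the DKW and KMT tail parameters be chosen to give decay $n^{-(\gamma+1)}$ or better (so that the fixed multiplicative constants $b$ and $4$ from the inequalities can be absorbed), and that one exploits $(\log n)^{3/2}\leq (\log n)^2$ for $n\geq 3$ rather than trying to match the exponent $2$ sharply.
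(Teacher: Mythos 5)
Your proposal is correct and follows essentially the same route as the paper: apply the deterministic majorization (\ref{eq: DnM}), then use KMT for $D_n$ and DKW for $\|\alpha_n\|_\infty$ and $\|B_n\|_\infty$, and finish with a union bound. The only (immaterial) difference is that you take the DKW threshold proportional to $\sqrt{\log n}$ rather than $\log n$, which in fact gives the slightly sharper $(\log n)^{3/2}$ before you relax it to $(\log n)^2$; the paper instead chooses the DKW threshold to be $\log n/(a+2\gamma/c)$ so that the constants cancel against the KMT threshold and $(\log n)^2$ appears directly with $C_{M,\phi}=8MC_\phi^2$.
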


\begin{proof}
For $\gamma>0$, take $x=2\gamma\log n/c$ in $\eqref{eq: D_n}$. If
$n$ is sufficiently large depending on $\gamma$,
\begin{equation}
{\textrm Pr}\left\{D_n\geq\frac{1}{\sqrt
n}\left(a+\frac{2\gamma}{c}\right)\log n\right\}\leq b
\exp\left(-2\gamma\log n\right)\leq\frac{1}{2}n^{-\gamma}.
\end{equation}
From DKW inequalities $\eqref{eq: dkw}$, it is easy to see that for
$n$ large enough,
\begin{equation}
{\textrm Pr}\left\{\|\alpha_n\|_\infty+\|B_n\|_\infty>\frac{\log
n}{a+2\gamma/c}\right\}\leq\frac{1}{2}n^{-\gamma}.
\end{equation}
Combining these with $\eqref{eq: DnM}$, we get
\begin{equation}
\begin{aligned}
&\quad{\textrm Pr}\left\{D_n(M)\geq \frac{8MC_\phi^2(\log
n)^2}{2^{-3j_n/2}\sqrt{n}}\right\}\\
&\leq{\textrm Pr}\left\{D_n\geq\frac{1}{\sqrt
n}\left(a+\frac{2\gamma}{c}\right)\log n\right\}+{\textrm
Pr}\left\{\|\alpha_n\|_\infty+\|B_n\|_\infty>\frac{\log
n}{a+2\gamma/c}\right\}\\
&\leq n^{-\gamma}.
\end{aligned}
\end{equation}
Setting $C_{M,\phi}=8MC_\phi^2$ yields $\eqref{eq: Cmphi}$.
\end{proof}

It is easier to obtain a moderate deviation result for
$2^{j_n/2}\int_{-M}^M ((\Gamma_n(t))^2-E((\Gamma_n(t))^2))dt$ than
for $2^{3j_n/2}W_n([-M,M])/n$. For the former we can adapt the
method in (GM) where they obtain a moderate deviation result for
similar random variables by adapting a method of Pinsky (Pinsky,
1966) to prove the LIL for sums of random variables with finite
moments higher than 2. It is a well-known fact that $\int_{-M}^M
((\Gamma_n(t))^2-E((\Gamma_n(t))^2))dt$ can be written as a sum of
weighted, centered chi-squared random variables(e.g., Proposition
4.3, GM, 2004). Recall the operator $\mathcal{R}_{n,F}$ defined in
$\eqref{eq: operator}$. Let
$\lambda_{n,1}\geq\lambda_{n,2}\geq\ldots\geq0$ be the eigenvalues
of the operator $\mathcal{R}_{n,F}$ with $F=[-M,M]$. $Z_k$ are i.i.d
$\mathcal{N}(0,1)$. We then have
\begin{equation}   \label{eq: gamma}
\int_F\left[(\Gamma_n(t))^2-\ee(\Gamma_n(t))^2\right]dt=\sum_{k=1}^\infty\lambda_{n,k}(Z_k^2-1).
\end{equation}
The limiting variance is calculated using Lemmas $\ref{Lemma 2.5}$,
$\ref{Lemma 2.4}$:
\begin{equation}
\begin{split}   \label{eq: modlim}
&\quad\lim_{n\to\infty}2^{j_n}\ee\left[\int_{-M}^M\left((\Gamma_n(t))^2-E(\Gamma_n(t))^2\right)dt\right]^2\\
&=\lim_{n\to\infty}2\cdot2^{j_n}\sum_{k=1}^\infty\lambda_{n,k}^2\\
&=\lim_{n\to\infty}2\cdot2^{j_n}\int_{-M}^M\int_{-M}^MR_n^2(s,t)dsdt\\
&=2\int_{-M}^Mf^2(x)dx=:\sigma^2(M).
\end{split}
\end{equation}
Set
$b_n:=\left(\lambda_{n,1}/\sqrt{\sum_{k=1}^\infty\lambda_{n,k}^2}\right)^\eta$
for some $0<\eta\leq1$ and
\begin{equation} V_n(M):={2^{j_n/2}\over
\sigma(M)}\int_{-M}^M\Bigl((\Gamma_n(t))^2-E(\Gamma_n(t))^2\Bigl)dt.
\end{equation}
Using $\eqref{eq: gamma}$ and a modification of Pinsky's method, we
have a moderate deviation for $V_n(M)$, which is parallel to
$(4.15)$, (GM). For any sequence $a_n$ converging to infinity at the
rate $a_n^2+\log b_n\to-\infty$ and for all $0<\epsilon<1$,
\begin{equation}  \label{eq: VnM}
\exp\left(-\frac{a_n^2(1+\epsilon)}{2}\right)\leq{\rm Pr}\left\{\pm
V_n(M)\geq
a_n\right\}\leq\exp\left(-\frac{a_n^2(1-\epsilon)}{2}\right)
\end{equation}
if $n$ is large enough depending on $\epsilon$.

We can use this result, the triangle inequality and Proposition
$\ref{prop 4.2}$ to obtain:
\begin{proposition} \label{prop 4.7}
Let $a_n=C\sqrt{2\log\log n},\ 0<C<\infty$. Under the hypotheses of
Proposition $\ref{prop 4.2}$, and further assuming that $f$
satisfies condition (f) and that $\int_{-M}^Mf^2(x)dx>0$, then we
have a two-sided inequality,
%(4.16)
\begin{equation}  \label{eq: 4.16}
\exp\left(-\frac{a_n^2(1+\epsilon)}{2}\right)-{1\over n^2}\leq{\rm
Pr}\left\{\pm {2^{3j_n/2}\over \sigma(M)n}W_n([-M,M])\geq
a_n\right\}\leq\exp\left(-\frac{a_n^2(1-\epsilon)}{2}\right)+{1\over
n^2}
\end{equation}
for all $0<\epsilon<1$ and $n$ large enough (depending on $M$ and
$\epsilon$).
\end{proposition}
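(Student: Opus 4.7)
The plan is to use Proposition \ref{prop 4.2} to replace $\frac{2^{3j_n/2}}{\sigma(M)n}W_n([-M,M])$ by $V_n(M)$ up to a deterministic error of order $o(a_n)$, and then apply the moderate deviation inequality \eqref{eq: VnM} to $V_n(M)$ after an $\varepsilon$-adjustment in $a_n$.

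First, I would divide the identity defining $D_n(M)$ in \eqref{eq: DnM} by $\sigma(M)$ to obtain
\begin{equation*}
\left|\frac{2^{3j_n/2}}{\sigma(M)n}W_n([-M,M])-V_n(M)\right|\leq \frac{D_n(M)}{\sigma(M)}.
\end{equation*}
Apply Proposition \ref{prop 4.2} with $\gamma=2$ to obtain an event $\Omega_n$ with $\Pr(\Omega_n^c)\leq n^{-2}$, on which $D_n(M)\leq C_{M,\phi}(\log n)^2 \cdot 2^{3j_n/2}/\sqrt{n}$. Using (B1), i.e. $2^{-j_n}\asymp n^{-\delta}$ with $\delta\in(0,1/3)$, the right-hand side is of order $(\log n)^2 n^{3\delta/2-1/2}\to 0$; in particular, if we set $\eta_n := C_{M,\phi}(\log n)^2 2^{3j_n/2}/(\sigma(M)\sqrt{n})$, then $\eta_n/a_n\to 0$ since $a_n=C\sqrt{2\log\log n}\to\infty$.

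For the upper bound in \eqref{eq: 4.16}, split on $\Omega_n$:
\begin{equation*}
\Pr\left\{\pm\frac{2^{3j_n/2}}{\sigma(M)n}W_n([-M,M])\geq a_n\right\}
\leq \Pr\left\{\pm V_n(M)\geq a_n-\eta_n\right\}+\Pr(\Omega_n^c).
\end{equation*}
Since $\eta_n/a_n\to 0$, for any fixed $0<\varepsilon<1$ we have $(a_n-\eta_n)^2\geq a_n^2(1-\varepsilon/2)$ eventually. Apply \eqref{eq: VnM} with the parameter $\varepsilon'$ chosen small enough that $(1-\varepsilon/2)(1-\varepsilon')\geq 1-\varepsilon$, obtaining an upper bound of $\exp(-a_n^2(1-\varepsilon)/2)+n^{-2}$. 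Symmetrically, for the lower bound, note
\begin{equation*}
\Pr\left\{\pm V_n(M)\geq a_n+\eta_n\right\}
\leq \Pr\left\{\pm\frac{2^{3j_n/2}}{\sigma(M)n}W_n([-M,M])\geq a_n\right\}+\Pr(\Omega_n^c),
\end{equation*}
so the lower estimate in \eqref{eq: VnM} applied at $a_n+\eta_n$ with a correspondingly small $\varepsilon'$ yields the matching lower bound $\exp(-a_n^2(1+\varepsilon)/2)-n^{-2}$.

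The only slightly delicate point is to check that the condition $a_n^2+\log b_n\to -\infty$ required in \eqref{eq: VnM} is preserved under the shift $a_n\mapsto a_n\pm\eta_n$; since $\eta_n\to 0$ and $a_n\to\infty$ gently, the shifted sequences satisfy the same asymptotic. All other steps are bookkeeping: the main conceptual work has already been done in Proposition \ref{prop 4.2} (Gaussian-chaos approximation via KMT and DKW) and in \eqref{eq: VnM} (Pinsky's method applied to the chi-square expansion \eqref{eq: gamma}), so no further probabilistic estimates are needed.
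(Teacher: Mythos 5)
Your proposal is correct and takes essentially the same route the paper indicates: it combines the moderate deviation bound \eqref{eq: VnM} for $V_n(M)$ with Proposition \ref{prop 4.2} (at $\gamma=2$, which accounts for the $n^{-2}$ error term) via the triangle inequality, exactly as the paper's one-line lead-in ("We can use this result, the triangle inequality and Proposition \ref{prop 4.2} to obtain") suggests. The details you fill in — that $\eta_n = o(1)$ and even $a_n\eta_n\to 0$ under (B1) so the shift $a_n\mapsto a_n\mp\eta_n$ can be absorbed by replacing $\epsilon$ with a smaller $\epsilon'$ in \eqref{eq: VnM}, and that the rate condition $a_n^2+\log b_n\to-\infty$ is stable under this shift — are the right checks.
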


\section{Main Proofs}
\subsection{Theorem $\ref{Theorem 5.1}$}
\begin{proof}
We show that $J_n=\bar{J}_n$, where $\bar{J}_n$ is defined in
$\eqref{eq: defJnb}$. Since we have,
\begin{equation}
J_n=\int_\rr f_{n,K}^2-\ee f_{n,K}^2-2f_{n,K}f+2f\ee f_{n,K},
\end{equation}
and
\begin{equation}
\bar{J}_n=\int_\rr f_{n,K}^2-2f_{n,K}\ee f_{n,K}-\ee
f_{n,K}^2+2\left(\ee f_{n,K}\right)^2.
\end{equation}
It remains to show that the difference
\begin{equation}
J_n-\bar{J}_n=2\int_\rr (f-\ee f_{n,K})(\ee f_{n,K}-f_{n,K})=0.
\end{equation}
$\ee f_{n,K}-f_{n,K}$ is a linear combination of $\{\phi_{0k}\}$ and
$\{\psi_{jk}\}$, $0\leq j\leq j_n-1$, whereas $f-\ee f_{n,K}$ is a
linear combination of $\{\psi_{jk}\}$, $j\geq j_n$. By orthogonality
of $\{\phi_{0k},\psi_{jk}\}$, we have $J_n-\bar{J}_n=0$. Thus the
proof of Theorem $\ref{Theorem 5.1}$ reduces to proving that
\begin{equation}
\limsup_{n\to\infty}\pm\frac{n2^{-j_n/2}}{\sigma\sqrt{2\log\log
n}}\bar{J}_n=1, \ \ \ \ a.s.
\end{equation}
By $\eqref{eq: Jnbar}$, this is equivalent to
\begin{equation}  \label{eq: lb}
\limsup_{n\to\infty}\pm\frac{2^{3j_n/2}W_n(\rr)}{n\sigma\sqrt{2\log\log
n}}=1.
\end{equation}
Since we have analogous variance computation, tail estimation and
moderate deviation results to those for the kernel density
estimator, the proof is the same as in Theorem 5.1, (GM). We give an
outline of the proof but readers should refer to (GM) for details.

(i) Proof of the lower bound: Lemma $\ref{Lemma Wn-Wnm}$ and
Borel-Cantelli implies that the random variable
$\displaystyle{\limsup_n\frac{W_n(\rr)}{\sigma
n2^{-3j_n/2}\sqrt{2\log\log n}}}$ is measurable with respect to the
tail $\sigma$-algebra of ${X_i}$. We assume the lower bound is not
true. In particular, we choose $r_k=k^k$, then there exists $c<1$
s.t.
\begin{equation}
\limsup_k\frac{W_{r_k}(\rr)}{\sigma
r_k2^{-3j_{r_k}/2}\sqrt{2\log\log r_k}}=c \ \ \ \ a.s.
\end{equation}
The proof of Lemma $\ref{Lemma Wn-Wnm}$ also applies to
$W_{r_k}(\rr)-W_{{r_k},{r_{k-1}}}(\rr)$ since $r_k/r_{k-1}\geq k$.
And we have
\begin{equation}
\frac{|W_{r_k}(\rr)-W_{{r_k},{r_{k-1}}}(\rr)|}{r_k\sigma2^{-3j_{r_k}/2}\sqrt{2\log\log
r_k}}\to0   \quad   a.s.
\end{equation}
Thus
\begin{equation}
\limsup_k\frac{W_{{r_k},{r_{k-1}}}(\rr)}{\sigma
r_k2^{-3j_{r_k}/2}\sqrt{2\log\log r_k}}=c \ \ \ \ a.s.
\end{equation}
By Borel-Cantelli, there exists $c'$ satisfying $c<c'<1$, s.t.
%(5.5)
\begin{equation}   \label{eq: 5.5}
\sum_k{\textrm Pr}\left\{\frac{W_{{r_k},{r_{k-1}}}(\rr)}{\sigma
r_k2^{-3j_{r_k}/2}\sqrt{2\log\log r_k}}\geq c'\right\}<\infty.
\end{equation}
Set $m_k:=r_k-r_{k-1}$ and define
\begin{equation}
W_{m_k}'(\rr):=\int_\rr\left(\sum_{i=1}^{r_k-r_{k-1}}\bar{K}(2^{jr_k}t,2^{jr_k}X_i)\right)^2dt
-\ee\int_\rr\left(\sum_{i=1}^{r_k-r_{k-1}}\bar{K}(2^{jr_k}t,2^{jr_k}X_i)\right)^2dt.
\end{equation}
Since $W_{m_k}'(\rr)$ and $W_{r_k,r_{k-1}}(\rr)$ have the same
distribution, $\eqref{eq: 5.5}$ holds with
$W_{{r_k},{r_{k-1}}}(\rr)$  replaced by $W_{m_k}'(\rr)$. This and
$m_k/r_k\to1$ imply that there exists $c''$ satisfying $c'<c''<1$,
s.t.
\begin{equation} \label{eq: 5.5'}
\sum_k{\textrm Pr}\left\{W_{m_k}'(\rr)\geq c''\sigma
m_k2^{-3j_{r_k}/2}\sqrt{2\log\log m_k}\right\}<\infty.
\end{equation}
We choose $M$ large enough so that $\int_{[-M,M]^C}f^2(x)dx<(\delta
c''\sigma)^2/\kappa_0,$ where $\kappa_0$ is the constant in
$\eqref{eq: 3.11}$. $W_{m_k}'(\rr)$ can be split into
$W_{m_k}'([-M,M])$ and $W_{m_k}'([-M,M]^C)$. $\eqref{eq: 3.11}$ is
used for $W_{m_k}'([-M,M]^C)$ and Proposition $\ref{prop 4.7}$ for
$W_{m_k}'([-M,M])$. Then we would reach a contradiction to
$\eqref{eq: 5.5'}$ and thus prove the lower bound.

(ii) Proof of the upper bound: We shall first use conditions $(B1)$
and $(B2)$ to introduce a blocking and reduce $W_n(\rr)$ to
$W_{n_k}(\rr)$ for the sequence $n_k:=\min\{n\in\nn:
n\geq\lambda_k\}$. $n_k$ satisfies the same properties as
$\lambda_k$ does. $I_k$ is the block defined by
$I_k:=[n_k,n_{k+1})\cap\nn.$ $I_k$ is nonempty for $k\geq k_0$.

By Borel-Cantelli, it suffices to show that, for every $\delta>0$,
\begin{equation}   \label{eq: 5.10}
\sum_{k\geq k_0}{\textrm Pr}\left\{\max_{n\in
I_k}|W_n(\rr)|>(1+\delta)\sigma n_k2^{-3j_{n_k}/2}\sqrt{2\log\log
n_k}\right\}<\infty.
\end{equation}
We will prove that for every $\tau>0$,
\begin{equation}   \label{eq: 5.11}
\sum_{k\geq k_0}{\textrm Pr}\left\{\max_{n\in
I_k}|W_n(\rr)-W_{n_k}(\rr)|>\tau\sigma
n_k2^{-3j_{n_k}/2}\sqrt{2\log\log n_k}\right\}<\infty.
\end{equation}
For $n\in I_k$, similar to $\eqref{eq: 3.4}$, we have
\begin{equation}
\begin{split}  \label{eq: 5.13}
W_n(\rr)-W_{n_k}(\rr)=&2\sum_{i=1}^{n_k}\sum_{j=n_k+1}^n
H_{n_k}(X_i,X_j)+\sum_{n_k<i\neq j\leq
n}H_{n_k}(X_i,X_j)\\
&+\sum_{i=n_k+1}^n(H_{n_k}(X_i,X_i)-\ee H_{n_k}(X_i,X_i)).
\end{split}
\end{equation}
$H_n$ is replaced by $H_{nk}$ since $\{2^{-j_n}\}$ is constant for
$n\in I_k$ by hypothesis. We will apply Montgomery-Smith maximal
inequality (Montgomery-Smith, 1993) to the first and the last
summands directly: If $X_i$ are $i.i.d$ r.v.'s taking values in a
Banach space and $\|\cdot\|$ is a norm in the Banach space, then
\begin{equation}
{\textrm Pr}\left\{\max_{1\leq k\leq
n}\left\|\sum_{i=1}^kX_i\right\|>t\right\}\leq9{\textrm
Pr}\left\{\left\|\sum_{i=1}^nX_i\right\|>{t\over 30}\right\}.
\end{equation}
However, the second summand is not a sum of i.i.d random variables.
A decoupling inequality (e.g., de la Pe\~{n}a and Gin\'{e}, 1999,
Theorem 3.4.1) is used to transform it into independent variables,
i.e., $\sum_{n_k<i\neq j\leq n}H_{n_k}(X_i^{(1)}, X_j^{(2)})$, where
$X_i^{(1)}$ and $X_j^{(2)}$, $i,j\in \nn$ are $i.i.d.$ copies of
$X_1$. Then we add the diagonal, apply Montgomery-Smith inequality
twice and subtract the diagonal at last. We will be able to reduce
$\eqref{eq: 5.11}$ to proving that, for every $\tau>0$,
%(5.16)
\begin{equation} \label{eq: 5.16}
\sum_{k\geq k_0}{\textrm
Pr}\left\{\left|\sum_{j=n_k+1}^{n_{k+1}-1}\sum_{i=1}^{n_k}H_{n_k}(X_i,X_j)\right|>\tau\sigma
n_k2^{-3j_{n_k}/2}\sqrt{2\log\log n_k}\right\}<\infty,
\end{equation}
%(5.19)
\begin{equation} \label{eq: 5.19}
\sum_{k\geq k_0}{\textrm
Pr}\left\{\left|\sum_{i=n_k+1}^{n_{k+1}-1}(H_{n_k}(X_i,X_i)-\ee
H_{n_k}(X_i,X_i))\right|> \tau\sigma
n_k2^{-3j_{n_k}/2}\sqrt{2\log\log n_k}\right\}<\infty,
\end{equation}
and
%(5.17)
\begin{equation} \label{eq: 5.17}
\sum_{k\geq k_0}{\textrm
Pr}\left\{\left|\sum_{j=n_k+1}^{n_{k+1}-1}\sum_{i=n_k+1}^{n_{k+1}-1}H_{n_k}(X_i^{(1)},
X_j^{(2)})\right|>\tau\sigma n_k2^{-3j_{n_k}/2}\sqrt{2\log\log
n_k}\right\}<\infty,
\end{equation}
%(5.18)
\begin{equation} \label{eq: 5.18}
\sum_{k\geq k_0}{\textrm Pr}\left\{\left|\sum_{i=n_k+1}^
{n_{k+1}-1}H_{n_k}(X_i^{(1)}, X_i^{(2)})\right|>\tau\sigma
n_k2^{-3j_{n_k}/2}\sqrt{2\log\log n_k}\right\}<\infty.
\end{equation}
$\eqref{eq: 5.16}$, $\eqref{eq: 5.19}$ come from the first and last
summands in $\eqref{eq: 5.13}$ whereas $\eqref{eq: 5.17}$,
$\eqref{eq: 5.18}$ come from the second summand. We apply
Bernstein's inequality to $\eqref{eq: 5.19}$ and $\eqref{eq: 5.18}$.
Proposition $\ref{prop 3.4}$ will take care of $\eqref{eq: 5.16}$
and $\eqref{eq: 5.17}$. Therefore, $\eqref{eq: 5.11}$ is proved.
Thus $\eqref{eq: 5.10}$ is reduced to showing that for every
$\delta>0$,
\begin{equation} \label{eq: 5.12}
\sum_{k\geq k_0}{\textrm Pr}\left\{|W_{n_k}(\rr)|>(1+\delta)\sigma
n_k2^{-3j_{n_k}/2}\sqrt{2\log\log n_k}\right\}<\infty.
\end{equation}

The second step is to reduce $W_{nk}(\rr)$ to $W_{nk}([-M,M])$ for
some $M$ large enough. Given $\delta>0$, there exists $M<\infty$
such that $\int_{[-M,M]^C}f^2(x)dx<\delta^2\sigma^2/(4\kappa_0)$,
where $\kappa_0$ is the constant in inequality $\eqref{eq: 3.11}$.
Application of $\eqref{eq: 3.11}$ gives that, from some $k$ on,
\begin{equation} \label{eq: 5.21}
\Pr\left\{|W_{n_k}([-M,M]^C)|>{\delta\over2}\sigma
n_k2^{-3j_{n_k}/2}\sqrt{2\log\log
n_k}\right\}\leq\kappa_0\exp\left(-2\log\log n_k\right),
\end{equation}
where the right hand side is the general term of a convergent
series. Let $\epsilon$ be so small that
$(1+\delta/2)^2(1-\epsilon)>1$. Now we use $\eqref{eq: 4.16}$ to
obtain that, for $n_k$ large enough,
\begin{equation}
\begin{split}
&\quad{\textrm Pr}\left\{|W_{n_k}([-M,M])|>(1+{\delta\over2})\sigma
n_k2^{-3j_{n_k}/2}\sqrt{2\log\log n_k}\right\}\\
&\leq{\textrm
Pr}\left\{|W_{n_k}([-M,M])|>(1+{\delta\over2})\sigma(M)
n_k2^{-3j_{n_k}/2}\sqrt{2\log\log n_k}\right\}\\
&\leq\exp\left(-(1+\delta/2)^2(1-\epsilon)\log\log
n_k\right)+\frac{1}{n_k^2}\ ,
\end{split}
\end{equation}
which is also the general term of a convergent series. Hence the
series $\eqref{eq: 5.12}$ converges for every $\delta>0$.

\end{proof}

\subsection{Theorem $\ref{Theorem BE}$}
\begin{proof} Without loss of generality, we will assume that, for all $n$,
there exist constants $C_1$ and $C_2$, such that
$C_1n^{\delta}\leq2^{j_n}\leq C_2n^{\delta}.$ Proving Theorem
$\ref{Theorem BE}$ is equivalent to proving that
\begin{equation}
\sup_t|\Pr\{n2^{-j_n/2}\bar{J}_n\leq t\}-\Pr\{\sigma Z\leq t\}|\leq
C(n^{-3\delta/16}\vee n^{-\alpha\delta}\sqrt{\log n}).
\end{equation}
By $\eqref{eq: Jnbar}$ and $\eqref{eq: Wn dec}$, we have that
\begin{equation}
n2^{-j_n/2}\bar{J}_n/\sigma=\frac{2^{3j_n/2}}{n\sigma}W_n(\rr)
=\frac{2^{3j_n/2}}{n\sigma}U_n(\rr)+\frac{2^{3j_n/2}}{n\sigma}L_n(\rr).
\end{equation}
Using the triangle inequality, we can obtain an upper bound and a
lower bound for this statistic. For an arbitrary positive sequence
$\epsilon_{1,n}$,
\begin{equation}
\begin{aligned}    \label{eq: BE}
&\quad\sup_t\left|\Pr\{n2^{-j_n/2}\bar{J}_n/\sigma\leq t\}-\Pr\{Z\leq t\}\right|\\
&\leq\sup_t\left|\Pr\left\{\frac{2^{3j_n/2}}{n\sigma}U_n(\rr)\leq t\right\}-\Pr\{Z\leq t\}\right|\\
&\quad+\Pr\left\{\frac{2^{3j_n/2}}{n\sigma}\left|L_n(\rr)\right|>\epsilon_{1,n}\right\}
+\sup_t\Pr\left\{t-\epsilon_{1,n}<Z\leq t+\epsilon_{1,n}\right\}.
\end{aligned}
\end{equation}
It's easy to bound the last term:
\begin{equation} \label{eq: est nor}
\sup_t\Pr\left\{t-\epsilon_{1,n}<Z\leq
t+\epsilon_{1,n}\right\}<\epsilon_{1,n}.
\end{equation}
By $\eqref{eq: 3.5}$, for $0<\epsilon_{1,n}\leq1$ so that
$\epsilon_{1,n}^2\leq\epsilon_{1,n}$,
\begin{align}
\Pr\left\{\left|L_n(\rr)\right|>\sigma\epsilon_{1,n}n2^{-3j_n/2}\right\}
&\leq
C\exp\left(-\frac{1}{C}\min\left(\sigma^2\epsilon_{1,n}^2n2^{-j_n},\sigma\epsilon_{1,n}
n2^{-j_n/2}\right)\right)\notag\\
&\leq C\exp\left(-\frac{1}{C}\epsilon_{1,n}^2n^{1-\delta}\right),
\end{align}
where $C$ depends on both $\phi$ and $f$, $\delta\in(0,1/3)$. We may
take $\epsilon_{1,n}=n^{-1/3}$ to obtain
\begin{equation}    \label{eq: be1}
\Pr\left\{\left|L_n(\rr)\right|>\sigma\epsilon_{1,n}n2^{-3j_n/2}\right\}\leq
C\exp\left(-\log n\right)=Cn^{-1}
\end{equation}
when $n$ is large enough. Using $\eqref{eq: est nor}$, we get
\begin{equation}   \label{eq: be2}
\sup_t\Pr\left\{t-\epsilon_{1,n}<Z\leq t+\epsilon_{1,n}\right\}\leq
n^{-1/3}.
\end{equation}

To control the first term in $\eqref{eq: BE}$, we will approximate
$2^{3j_n/2}U_n(\rr)/(n\sigma)$ by $S_{nn}$, which is defined below.
We set
\begin{equation} \label{eq: Unn}
U_{nn}:=\sum_{i=2}^n\sum_{j=1}^{i-1}H_n(X_i,X_j), \
s_n^2:=\ee(U_{nn}^2),
\end{equation}
and
\begin{equation}  \label{eq: Xni}
X_{ni}:=\sum_{j=1}^{i-1}\frac{H_n(X_i,X_j)}{s_n}, \
S_{nk}:=\sum_{i=2}^kX_{ni},
\end{equation}
then
\begin{equation}  \label{eq: Snn}
S_{nn}=\sum_{i=2}^n\sum_{j=1}^{i-1}\frac{H_n(X_i,X_j)}{s_n}.
\end{equation}
Analogous to $\eqref{eq: BE}$, for any positive sequence
$\epsilon_{2,n}$,
\begin{equation}
\begin{aligned}    \label{eq: BE2}
&\sup_t\left|\Pr\left\{\frac{2^{3j_n/2}}{n\sigma}U_n(\rr)\leq t\right\}-\Pr\{Z\leq t\}\right|\\
\leq&\sup_t\left|\Pr\left\{S_{nn}\leq t\right\}-\Pr\{Z\leq t\}\right|\\
&+\Pr\left\{\left|\frac{2^{3j_n/2}}{n\sigma}U_n(\rr)-S_{nn}\right|>\epsilon_{2,n}\right\}
+\sup_t\Pr\left\{t-\epsilon_{2,n}<Z\leq t+\epsilon_{2,n}\right\}.
\end{aligned}
\end{equation}
By $\eqref{eq: 3.3}$,
\begin{align}     \label{eq: Un-Sn}
\Pr\left\{\left|\frac{2^{3j_n/2}}{n\sigma}U_n(\rr)-S_{nn}\right|>\epsilon_{2,n}\right\}
=\Pr\left\{\left|\sum_{1\leq i\neq j\leq
n}H_n(X_i,X_j)\right|>\frac{\epsilon_{2,n}}{d_n}\right\},
\end{align}
where
$\displaystyle{d_n=\left|\frac{2^{3j_n/2}}{n\sigma}-\frac{1}{2s_n}\right|}$.
We then estimate the order of $d_n$. Recall the definition of
$R_n(s,t)$ in $\eqref{eq: Rn}$ and set
$e_n:=\left(2^{j_n}\int_{\rr^2} R_n^2(s,t)dsdt\right)^{1/2}$. Using
the definition of $s_n^2$ and Fubini's theorem, we get
\begin{equation}   \label{eq: sn2}
s_n^2=\sum_{i=2}^n\sum_{j=1}^{i-1}\ee
H_n^2(X_i,X_j)=\frac{n(n-1)}{2}2^{-3j_n}e_n^2.
\end{equation}
Plugging it into $d_n$ and using a triangle inequality, we then have
\begin{align}
d_n\leq&C2^{3j_n/2}\left|\frac{1}{n\sqrt{\int
f^2(x)dx}}-\frac{1}{\sqrt{n(n-1)\int
f^2(x)dx}}\right|\notag\\
&+C2^{3j_n/2}\left|\frac{1}{\sqrt{n(n-1)\int
f^2(x)dx}}-\frac{1}{\sqrt{n(n-1)}e_n}\right|.
\end{align}
Since $2^{j_n}\leq Cn^\delta$ for some $\delta\in(0,1/3)$ and
$1/\sqrt{n(n-1)}-1/n\leq n^{-2}$ when $n\geq 2$, the first term is
bounded by $Cn^{3\delta/2-2}$. Corollary $\ref{Lemma Rn2}$ gives
that $\left|e_n^2-\int_\rr f^2(x)dx\right|\leq
C(n^{-\delta/2}+n^{-\delta\alpha})$. The second term is bounded by
$Cn^{3\delta/2-1}(n^{-\delta/2}+n^{-\delta\alpha})$ when $n$ is
large enough. Combining the two terms, $d_n\leq
Cn^{3\delta/2-1}\left(n^{-\delta/2}+n^{-\delta\alpha}\right)$, where
$C$ depends on $f, \{j_n\}$ and $\phi$. Taking
$\epsilon_{2,n}=n^{-\delta(\frac{1}{2}\wedge\alpha)}\sqrt{\log n}$
and using $\eqref{eq: Un-Sn}$, Proposition $\ref{prop 3.4}$, we
obtain
\begin{equation}   \label{eq: be3}
\Pr\left\{\left|\frac{2^{3j_n/2}}{n\sigma}U_n(\rr)-S_{nn}\right|>\epsilon_{2,n}\right\}\leq
\kappa_0\exp\left(-\log n\right)=Cn^{-1}
\end{equation}
when $n$ is large enough. Consequently,
\begin{equation}   \label{eq: be4}
\sup_t\Pr\left\{t-\epsilon_{2,n}<Z\leq t+\epsilon_{2,n}\right\}\leq
n^{-\delta(\frac{1}{2}\wedge\alpha)}\sqrt{\log n}.
\end{equation}
We then deal with $\sup_t \left|\Pr\left\{S_{nn}\leq t\right\}-
\Pr\{Z\leq t\}\right|$. Let $\mathcal{F}_i$ be the $\sigma$-field
generated by $\{X_1, X_2, ..., X_i\}$ for $i=1,2,...$.  We first
observe that, by the definitions in $\eqref{eq: Unn}$-$\eqref{eq:
Snn}$,
\begin{equation}
\mu_{ni}:=\ee(X_{ni}|\mathcal{F}_{i-1})=0,
\end{equation}
and thus $S_{nk}$ is a martingale with respect to $\mathcal{F}_{k}$.
We will use the result of Erickson, Quine and Weber (1979) to derive
a bound for $\sup_t\left|\Pr\left\{S_{nn}\leq t\right\}-\Pr\{Z\leq
t\}\right|$. For $i\geq2$, let $X_{ni}':=X_{ni}-\mu_{ni}$,
$\sigma_{ni}^2:=\ee\left({X_{ni}'}^2|\mathcal{F}_{i-1}\right)$ and
$\sigma_n^2:=\sum_{i=2}^n\sigma_{ni}^2$. Also define
$Y_{ni}:=\sum_{j=1}^{i-1}H_n(X_i,X_j)$ and
$V_n^2:=\sum_{i=2}^n\ee\left(Y_{ni}^2|\mathcal{F}_{i-1}\right)$.
\begin{theorem}[Erickson, Quine, Weber, 1979]
Given $X=\{X_{ni}, i=2,...,n; n=1,2,...\}$ and
$\mathcal{F}=\{\mathcal{F}_{i}, i=1,2,...\}$, let
$S_{nn}:=\sum_{i=2}^nX_{ni}$. If $\mu_{ni}=0$ for all $n,i$, then
for $\eta\in(0,1]$, there exists a constant $C$,
\begin{equation}     \label{eq: mart}
\sup_t\left|\Pr\left\{S_{nn}\leq t\right\}-\Pr\{Z\leq t\}\right|\leq
C\left\{\sum_{i=2}^n\ee|X_{ni}|^{2+\eta}+\ee|1-\sigma_n^2|^{1+\eta/2}\right\}^{1/(3+\eta)}.
\end{equation}
\end{theorem}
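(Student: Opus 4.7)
The plan is to prove this martingale Berry–Esseen bound by combining Esseen's smoothing inequality with a characteristic-function estimate derived from the martingale difference structure. Setting $\phi_n(u):=\ee e^{iuS_{nn}}$, Esseen's lemma gives, for every $T>0$,
\begin{equation*}
\sup_t|\Pr\{S_{nn}\leq t\}-\Pr\{Z\leq t\}|\leq\frac{1}{\pi}\int_{-T}^T\frac{|\phi_n(u)-e^{-u^2/2}|}{|u|}\,du+\frac{C}{T},
\end{equation*}
so the task reduces to a pointwise bound on $|\phi_n(u)-e^{-u^2/2}|$ for $|u|\leq T$, followed by an optimization of $T$.

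The technical core is a conditional Taylor expansion. Using the sharp bound $|e^{iy}-1-iy+y^2/2|\leq c_\eta|y|^{2+\eta}$ (valid for $\eta\in(0,1]$) with $y=uX_{ni}'$, together with $\ee[X_{ni}'|\mathcal{F}_{i-1}]=0$ and $\ee[(X_{ni}')^2|\mathcal{F}_{i-1}]=\sigma_{ni}^2$, I would first obtain
\begin{equation*}
\bigl|\ee[e^{iuX_{ni}'}|\mathcal{F}_{i-1}]-e^{-u^2\sigma_{ni}^2/2}\bigr|\leq c_\eta|u|^{2+\eta}\ee\bigl[|X_{ni}'|^{2+\eta}\bigm|\mathcal{F}_{i-1}\bigr]+\tfrac{u^4}{8}\sigma_{ni}^4,
\end{equation*}
where the quartic term accounts for the gap between $1-u^2\sigma_{ni}^2/2$ and $e^{-u^2\sigma_{ni}^2/2}$. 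The algebraic telescoping identity $\prod_i a_i-\prod_i b_i=\sum_i(\prod_{j<i}a_j)(a_i-b_i)(\prod_{j>i}b_j)$, applied with $a_i=e^{iuX_{ni}'}$ (unimodular) and $b_i=e^{-u^2\sigma_{ni}^2/2}\in(0,1]$, followed by taking expectation and absorbing the quartic remainder via the interpolation $\sigma_{ni}^{4}\leq\ee[|X_{ni}'|^{2+\eta}|\mathcal{F}_{i-1}]^{4/(2+\eta)}$, produces
\begin{equation*}
|\phi_n(u)-\ee e^{-u^2\sigma_n^2/2}|\leq C|u|^{2+\eta}A_n,\qquad A_n:=\sum_{i=2}^n\ee|X_{ni}'|^{2+\eta}.
\end{equation*}
The residual variance comparison $|\ee e^{-u^2\sigma_n^2/2}-e^{-u^2/2}|$ I would handle by splitting on $\{|\sigma_n^2-1|\leq\epsilon\}$ versus its complement, bounding the integrand by $u^2\epsilon/2$ on the good event and by $1$ off it, and using Chebyshev with $B_n:=\ee|1-\sigma_n^2|^{1+\eta/2}$ to control $\Pr\{|\sigma_n^2-1|>\epsilon\}\leq B_n/\epsilon^{1+\eta/2}$, then optimizing $\epsilon$.

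The hard part will be the final optimization. After the two comparisons one has a pointwise estimate of the form $|\phi_n(u)-e^{-u^2/2}|\leq C(|u|^{2+\eta}A_n+|u|^\alpha B_n^{\beta})$ with explicit fractional exponents $\alpha,\beta$ coming from the $\epsilon$-optimization; substituting this into Esseen's inequality gives a sum of three terms that must all collapse to the common exponent $1/(3+\eta)$ upon choosing $T\asymp(A_n+B_n)^{-1/(3+\eta)}$. Arranging the exponents to match on both $A_n$ and $B_n$ is delicate — it is precisely the reason the hypothesis uses the full $(1+\eta/2)$-moment of $|1-\sigma_n^2|$ rather than just the first moment, and it requires that the truncation level $\epsilon$ be chosen in tandem with $T$, with a further check that the quartic Taylor residual remains dominated by the $(2+\eta)$-moment main term on the range $|u|\leq T$. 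Once this balancing is in place, the rest is routine bookkeeping inside the Esseen integral and yields the stated bound.
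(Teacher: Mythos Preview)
The paper does not prove this theorem at all: it is quoted from Erickson, Quine and Weber (1979) and used as a black-box tool inside the proof of Theorem~\ref{Theorem BE}. There is therefore no ``paper's own proof'' against which to compare your attempt; the only thing the paper does with the statement is apply it, after verifying $\mu_{ni}=0$, by estimating $\sum_i\ee|X_{ni}|^{2+\eta}$ and $\ee|1-\sigma_n^2|^{1+\eta/2}$ separately.

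As for your sketch on its own merits, the overall architecture (Esseen's smoothing lemma, conditional Taylor expansion of the characteristic function, telescoping product, then optimization of $T$) is indeed the route used in the original Erickson--Quine--Weber paper and in related work such as Heyde and Brown. One step that would need more care is your handling of the quartic remainder: the inequality $\sigma_{ni}^4\leq\bigl(\ee[|X_{ni}'|^{2+\eta}\mid\mathcal{F}_{i-1}]\bigr)^{4/(2+\eta)}$ is correct pointwise, but summing it does not yield a bound by $A_n=\sum_i\ee|X_{ni}'|^{2+\eta}$ because the exponent $4/(2+\eta)>1$; the usual fix is instead to restrict to $|u|\leq T$ and use $u^4\sigma_{ni}^4\leq T^{2-\eta}|u|^{2+\eta}\sigma_{ni}^{2+\eta}\leq T^{2-\eta}|u|^{2+\eta}\ee[|X_{ni}'|^{2+\eta}\mid\mathcal{F}_{i-1}]$ (or a closely related truncation), which then folds cleanly into the $A_n$ term once $T$ is chosen of the right order. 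With that adjustment the balancing you describe goes through and produces the exponent $1/(3+\eta)$.
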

Consider the second term:
\begin{equation}
\begin{aligned}
&\ee\left|1-\sigma_n^2\right|^2=s_n^{-4}\ee\left|s_n^2-V_n^2\right|^2\leq
s_n^{-4}\ee(V_n^4).
\end{aligned}
\end{equation}
Set $G_n(x,y)=\ee\left(H_n(X_1,x)H_n(X_1,y)\right)$, then by the
proof of Theorem 1, Hall (1984),
\begin{equation}
\ee(V_n^4)\leq
C\left(n^4\ee{G_n^2(X_1,X_2)}+n^3\ee{G_n^2(X_1,X_1)}\right)\leq
C\left(n^4\ee{G_n^2(X_1,X_2)}+n^3\ee{H_n^4(X_1,X_2)}\right).
\end{equation}
By $\eqref{eq: sn2}$ and Corollary $\ref{Lemma Rn2}$, $s_n^4\asymp
n^{4-6\delta}$. The calculations in Theorem 1, Zhang and Zheng
(1999) can be applied here directly. $H_n(x,y)$ defined in
$\eqref{eq: Hn}$ is off by a scaling constant $2^{-2j_n}n^2$ from
their definition.
\begin{equation}    \label{eq: Hn4}
\ee{H_n^4(X_1,X_2)}=\left(2^{-2j_n}{n^2}\right)^4O(2^{3j_n}/n^8)=O(2^{-5j_n})=O(n^{-5\delta}),
\end{equation}
and
\begin{equation}
\ee{G_n^2(X_1,X_2)}=\left(2^{-2j_n}{n^2}\right)^4O(2^{j_n}/n^8)=O(2^{-7j_n})=O(n^{-7\delta}).
\end{equation}
Combining these estimates and using H\"{o}lder inequality, we see
\begin{equation}  \label{eq: var df}
\ee|1-\sigma_n^2|^{1+\eta/2}\leq Cn^{-\delta(2+\eta)/4}.
\end{equation}
For the first term in $\eqref{eq: mart}$, we observe that
\begin{equation}
\sum_{i=2}^n\ee|X_{ni}|^{2+\eta}\leq\sum_{i=2}^n\frac{1}{s_n^{2+\eta}}
\left(\ee\left|\sum_{j=1}^{i-1}H_n(X_i,X_j)\right|^3\right)^{(2+\eta)/3}.
\end{equation}
Let $\ee_i$ denote the expectation with respect to $X_i$ and
$\ee_{i'}$ denote the expectation with respect to $X_1,...,X_{i-1}$.
We can apply a Hoffmann-Jorgensen type inequality with respect to
$\ee_{i'}$(Theorem 1.5.13, de la Pe\~{n}a and Gin\'{e}, 1999),
\begin{equation}
\ee\left|\sum_{j=1}^{i-1}H_n(X_i,X_j)\right|^3\leq
C\ee_{i}\left\{\ee_{i'}\max_{1\leq j\leq
i-1}\left|H_n(X_i,X_j)\right|^3+\left(\ee_{i'}\left(\sum_{j=1}^{i-1}H_n(X_i,X_j)\right)^2\right)^{3/2}\right\}.
\end{equation}
The first term can be bounded using $\eqref{eq: 2.16}$. For the
second one, we use Jensen's inequality, H\"{o}lder inequality and
$\eqref{eq: Hn4}$ to get
\begin{equation}
\ee_{i}\left(\ee_{i'}\left(\sum_{j=1}^{i-1}H_n(X_i,X_j)\right)^2\right)^{3/2}
=\ee_{i}\left((i-1)\ee_1H_n^2(X_1,X_2)\right)^{3/2}\leq
C(i-1)^{3/2}n^{-15\delta/4}.
\end{equation}
These inequalities and $\sum_{i=2}^ni^{(2+\eta)/2}\leq
Cn^{2+\eta/2}$ lead to
\begin{equation} \label{eq: Xni bd}
\sum_{i=2}^n\ee|X_{ni}|^{2+\eta} \leq
C{n^{(3\delta/2-1)(2+\eta)}}n^{-\delta(2+\eta)}\sum_{i=2}^n\max(1,i^{3/2}n^{-3\delta/4})^{(2+\eta)/3}
\leq Cn^{\delta/2+\eta\delta/4-\eta/2}.
\end{equation}
Gathering $\eqref{eq: mart}, \eqref{eq: var df}$ and $\eqref{eq: Xni
bd}$ and noting that the bound is minimized when $\eta=1$, we arrive
at
\begin{equation}
\sup_t\left|\Pr\left\{S_{nn}\leq t\right\}-\Pr\{Z\leq t\}\right|\leq
C\max\left(n^{3\delta/16-1/8},n^{-3\delta/16}\right)\leq
Cn^{-3\delta/16}.
\end{equation}
Putting together the last inequality with $\eqref{eq: BE},
\eqref{eq: be1}, \eqref{eq: be2}, \eqref{eq: BE2}, \eqref{eq: be3}$
and $\eqref{eq: be4}$, we conclude that when $n$ is large enough
(depending on $f$ and $\phi$), there exists a constant C (depending
on $f$, $\phi$ and $\{j_n\}$),
\begin{equation}
\begin{aligned}
\sup_t|\Pr\{n2^{-j_n/2}\bar{J}_n/\sigma\leq t\}-\Pr\{Z\leq t\}|\leq&
C\left(n^{-\delta(\frac{1}{2}\wedge\alpha)}\sqrt{\log n}+n^{-3\delta/16}\right)\\
\leq&C(n^{-3\delta/16}\vee n^{-\alpha\delta}\sqrt{\log n}).
\end{aligned}
\end{equation}
Taking $C$ sufficiently large so that $\eqref{eq: BE thm}$ is true
for all $n$.

\end{proof}

\renewcommand{\theequation}{A.\arabic{equation}}
\renewcommand{\thetheorem}{A.\arabic{theorem}}
\setcounter{equation}{0} \setcounter{theorem}{0}
\section*{Appendix}  % use *-form to suppress numbering

\begin{proof}[Proof of Lemma $\ref{Lemma 2.4}$]
By the definition of $C_n(s,t)$,
\begin{equation}
\begin{split}
2^{j_n}\int_{[-M,M]^2}C_n^2(s,t)dsdt&=2^{3j_n}\int_{[-M,M]^2}\left\{\int_{\rr^2}K(2^{j_n}t,2^{j_n}x)
K(2^{j_n}s,2^{j_n}x)\right.\\
&\left.\phantom{\sum
}K(2^{j_n}t,2^{j_n}y)K(2^{j_n}s,2^{j_n}y)f(x)f(y)dxdy\right\}dsdt
\end{split}
\end{equation}
By change of variables $y=x-2^{-j_n}u, t=2^{-j_n}w+x,s=2^{-j_n}z+x$
and the compactness of $\Phi$, this integral is equal to
\begin{align}
&\int_{-A}^A\int_{-A}^A\int_{-2A}^{2A}\int_\rr
K(2^{j_n}x+z,2^{j_n}x)K(2^{j_n}x+w,2^{j_n}x)
K(2^{j_n}x+z,2^{j_n}x-u)K(2^{j_n}x+w,\notag\\
&\phantom{\int}2^{j_n}x-u)f(x)f(x-2^{-j_n}u)1(2^{-j_n}z+x\in [-M,M])1(2^{-j_n}w+x\in [-M,M])dxdudzdw\notag\\
=&\int_{-A}^A\int_{-A}^A\int_{-2A}^{2A}\sum_{i=-\infty}^\infty\int_0^{2^{-j_n}}
K(2^{j_n}x+z+i,2^{j_n}x+i)K(2^{j_n}x+w+i,2^{j_n}x+i)\notag\\
&K(2^{j_n}x+z+i, 2^{j_n}x-u+i)K(2^{j_n}x+w+i,2^{j_n}x-u+i)f(x+2^{-j_n}i)f(x+2^{-j_n}i-2^{-j_n}u)\notag\\
&1(2^{-j_n}z+x+2^{-j_n}i\in [-M,M])1(2^{-j_n}w+x+2^{-j_n}i\in
[-M,M])dxdudzdw.
\end{align}
Using $K(x+1,y+1)=K(x,y)$ and change of variables, it is in turn
equal to
\begin{align}  \label{eq: 4 int}
%&\quad\int_{-A}^A\int_{-A}^A\int_{-2A}^{2A}\sum_{i=-\infty}^\infty\int_0^{2^{-j_n}}
%K(2^{j_n}x+z,2^{j_n}x)K(2^{j_n}x+w,2^{j_n}x)K(2^{j_n}x+z,2^{j_n}x-u)\\
%&\quad
%K(2^{j_n}x+w,2^{j_n}x-u)f(x+2^{-j_n}i)f(x+2^{-j_n}i-2^{-j_n}u)1(2^{-j_n}z+x+2^{-j_n}i\in
%[-M,M])\\&\quad1(2^{-j_n}w+x+2^{-j_n}i\in [-M,M])dxdudzdw\\
&\int_{-A}^A\int_{-A}^A\int_{-2A}^{2A}\sum_{i=-\infty}^\infty\int_0^1
2^{-j_n}K(x+z,x)K(x+w,x)K(x+z,x-u)K(x+w,x-u) \notag \\
&f(2^{-j_n}(x+i))f(2^{-j_n}(x+i-u))1(2^{-j_n}(z+x+i)\in
[-M,M]) \notag \\
&1(2^{-j_n}(w+x+i)\in [-M,M])dxdudzdw.
\end{align}
To continue, it is convenient to write
\begin{align}  \label{eq: Ijn def}
&\quad\sum_{i=-\infty}^\infty2^{-j_n}
f(2^{-j_n}(x+i))f(2^{-j_n}(x+i-u))1(2^{-j_n}(z+x+i)\in
[-M,M])   \notag\\
&\quad1(2^{-j_n}(w+x+i)\in [-M,M])      \\
&=\left\{\sum_{i=2A}^\infty+\sum_{i=-2A}^{2A-1}+
\sum_{-\infty}^{-2A-1}\right\}2^{-j_n}f(2^{-j_n}(x+i))f(2^{-j_n}(x+i-u))  \notag\\
&\ \ \ \ 1(2^{-j_n}(z+x+i)\in [-M,M])1(2^{-j_n}(w+x+i)\in [-M,M])  \notag\\
&=:I_1(j_n)+I_2(j_n)+I_3(j_n)=I(j_n).  \notag
\end{align}
The next lemma proves the convergence of $I(j_n)$.
\begin{lemma}   \label{new lemma}
Assume that $f$ is bounded. For fixed $M>0$,
\begin{equation}
I(j_n)\to\int_{-M}^M f^2(y)dy    \label{eq: Ijn}
\end{equation}
uniformly for $x\in [0,1],u\in [-2A,2A],z\in [-A,A],w\in [-A,A]$ as
$n\to\infty$.
\end{lemma}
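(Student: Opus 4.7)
The plan is to view $I(j_n)$ as a Riemann sum on the grid $\{2^{-j_n}(x+i):i\in\zz\}$ with mesh $2^{-j_n}$, whose limit is $\int_{-M}^M f^2(y)\,dy$. The middle block $I_2(j_n)$ contains only $4A$ terms, each bounded by $2^{-j_n}\|f\|_\infty^2$; hence $|I_2(j_n)|\leq 4A\cdot 2^{-j_n}\|f\|_\infty^2\to 0$ uniformly in $(x,u,z,w)$. For the outer pieces I would set $y_i:=2^{-j_n}(x+i)$, so that $\Delta y=2^{-j_n}$; the splitting at $|i|=2A$ is arranged precisely so that in $I_1(j_n)$ both $y_i$ and $y_i-2^{-j_n}u$ are positive, and in $I_3(j_n)$ both are negative.

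Adding $I_1(j_n)+I_3(j_n)$, I get, up to an $O(2^{-j_n})$ contribution near $y=0$, the Riemann sum
\begin{equation*}
\sum_{i\in\zz}2^{-j_n}\,f(y_i)\,f(y_i-2^{-j_n}u)\,\mathbf{1}(y_i+2^{-j_n}z\in[-M,M])\,\mathbf{1}(y_i+2^{-j_n}w\in[-M,M]).
\end{equation*}
The task is to show this tends to $\int_{-M}^M f^2(y)\,dy$ uniformly in $(x,u,z,w)$ over their compact ranges. I would decompose the error into (i) replacing the shifted factor $f(y-2^{-j_n}u)$ by $f(y)$ and the shifted indicators by $\mathbf{1}_{[-M,M]}(y)$, whose cost is controlled by $L^1$-continuity of translation together with the uniform bound $|2^{-j_n}u|,|2^{-j_n}z|,|2^{-j_n}w|\leq 2A\cdot 2^{-j_n}\to 0$; and (ii) the classical Riemann-sum convergence $2^{-j_n}\sum_i f^2(y_i)\mathbf{1}_{[-M,M]}(y_i)\to\int_{-M}^M f^2(y)\,dy$ uniformly in the shift $x\in[0,1]$, which uses that $f^2$ is Riemann integrable on $[-M,M]$ (implicit in condition $(f)$).

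The main obstacle is obtaining joint uniformity in $(x,u,z,w)$ when $f$ is only bounded and possibly discontinuous, so neither (i) nor (ii) reduces to a pointwise continuity argument. I would overcome this by approximating $f$ in $L^2([-M-1,M+1])$ by a continuous, compactly supported $\tilde f$: for $\tilde f$ both the substitutions in (i) and the Riemann-sum convergence in (ii) hold uniformly in all parameters by joint uniform continuity on a compact set. The error from replacing $f$ by $\tilde f$, both in the Riemann sum and in the target integral, is controlled by Cauchy--Schwarz with $\|f-\tilde f\|_{L^2}$ arbitrarily small, the Riemann-sum side being estimated after expanding $f\cdot f - \tilde f\cdot \tilde f = (f-\tilde f)f + \tilde f(f-\tilde f)$. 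Sending the $L^2$-residual to zero after the limit $n\to\infty$ yields the claim.
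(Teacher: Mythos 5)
Your proposal is correct, but it takes a genuinely different route from the paper's. The paper works directly with $f$ and never approximates it: after splitting off the boundary terms $I_5(j_n)$, it re-indexes $I_4(j_n)$ as a weighted average of $4A+1$ sub-sums, each a genuine Riemann sum with step $\triangle y=2^{-j_n}(4A+1)$. This step size is chosen precisely so that for each $i,j$ the two arguments $2^{-j_n}(x+i)+j\triangle y$ and $2^{-j_n}(u'+i)+j\triangle y$ land in the \emph{same} subinterval of the partition $P_{i,n}$; since $f\geq 0$, the product $f(\xi)f(\xi')$ is sandwiched between $m_{ij}^2$ and $M_{ij}^2$, and one invokes Riemann integrability of $f^2$ to conclude. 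No translation argument is needed. You instead absorb the small offset via a continuous compactly supported approximant $\tilde f$, pass the Riemann sum to $\tilde f\cdot\tilde f$ (where joint uniform continuity gives the uniformity in all four parameters for free), and bound the discrepancy by discrete Cauchy--Schwarz. That argument is sound, provided one notes that the Riemann sum of $(f-\tilde f)^2$ must itself converge, uniformly in the grid shift $x$, to $\|f-\tilde f\|_{L^2}^2$ — which again requires $f$ Riemann integrable, the same hypothesis the paper needs. (The paper's assertion that $f\in L_1$ and bounded implies $f^2$ Riemann integrable is actually false in general; condition (f) is evidently meant to include Riemann integrability, as the introduction indicates, and both your proof and the paper's need that.) One small caution on your sketch: the early appeal to ``$L^1$-continuity of translation'' does not, as stated, control a Riemann sum, since a translated sum need not be close to the untranslated one for a merely bounded, discontinuous $f$ — but you then correctly identify exactly this as the main obstacle and the $\tilde f$ approximation resolves it. The trade-off: the paper's $4A+1$ averaging trick is more elementary and keeps the argument self-contained; your density argument is more systematic and would port more easily to higher-order products or other grids.
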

\begin{proof}
To simplify the notation, let $u'=x-u$, $z'=x+z$, $w'=x+w$. Then
$u'\in[-2A,2A+1]$, $z'\in[-A,A+1]$, $w'\in[-A,A+1]$. Consider
$I_1(j_n)$. The general summand of $I_1(j_n)$ is zero if
$2^{-j_n}(-A+i)>M$.
\begin{align}
I_1(j_n)&=\left(\sum_{i=2A}^{\lfloor2^{j_n}M\rfloor-2A-1}+\sum_{\lfloor2^{j_n}M\rfloor-2A}
^{\lfloor2^{j_n}M\rfloor+A}\right)2^{-j_n}
f(2^{-j_n}(x+i))f(2^{-j_n}(u'+i))\notag\\
&\quad1(2^{-j_n}(z'+i)\in [0,M])1(2^{-j_n}(w'+i)\in [0,M])\notag\\
&=: I_4(j_n)+I_5(j_n),\notag
\end{align}
where $\lfloor2^{j_n}M\rfloor$ is the largest integer less than or
equal to $2^{j_n}M$.

$I_5(j_n)$ is a finite sum with each summand bounded by a constant
times $2^{-j_n}$. So $I_5(j_n)\to0$ uniformly for $x\in [0,1],u\in
[-2A,2A],z\in [-A,A],w\in [-A,A]$.

Setting $\triangle y=2^{-j_n}(4A+1)$, we can simplify $I_4(j_n)$
since the indicator function in the general summand of $I_4(j_n)$
must be 1.
\begin{equation}
\begin{split}   \label{eq: I4jn}
I_4(j_n)&=\sum_{i=2A}^{\lfloor2^{j_n}M\rfloor-2A-1}2^{-j_n}
f(2^{-j_n}(x+i))f(2^{-j_n}(u'+i))\\
&=\frac{1}{4A+1}\sum_{i=2A}^{6A}\sum_{j=0}^{N_i}\triangle y
f(2^{-j_n}(x+i)+j\triangle y)f(2^{-j_n}(u'+i)+j\triangle y),
\end{split}
\end{equation}
where $N_i$ is the largest $j$ such that for fixed $i$,
$i+j(4A+1)\leq\lfloor2^{j_n}M\rfloor-2A-1$. $N_i=\lfloor M/\triangle
y-1\rfloor$ or $\lfloor M/\triangle y-2\rfloor$ depending on $i$.

For each $2A\leq i\leq6A$, consider the partition of $[0,M]$:
$$P_{i,n}=\{0, 2^{-j_n}(i-2A), 2^{-j_n}(i-2A)+\triangle y,..., 2^{-j_n}(i-2A)+(N_i+1)\triangle y,
M\}.$$ There are at most $N_i+3$ subintervals. Except for the first
and the last subintervals, whose lengths we denote respectively by
$\triangle y_{i,1}$ and $\triangle y_{i,N_i+3}$, all the
subintervals in this partition have length $\triangle
y=2^{-j_n}(4A+1)$. We also have $0\leq\triangle y_{i,1}\leq\triangle
y$ and $0\leq\triangle y_{i,N_i+3}\leq\triangle y$. Setting
\begin{equation}   \label{eq: Sni}
S_{i,n}:=f^2(0)\triangle y_{i,1}+\sum_{j=0}^{N_i}\triangle y
f(2^{-j_n}(x+i)+j\triangle y)f(2^{-j_n}(u'+i)+j\triangle
y)+f^2(M)\triangle y_{i,N_i+3},
\end{equation}
we see that
\begin{equation}   \label{eq: Sni lb}
S_{i,n}\leq f^2(0)\triangle y_{i,1}+\sum_{j=0}^{N_i}
M_{i,j}^2\triangle y+f^2(M)\triangle y_{i,N_i+3}
\end{equation}
and
\begin{equation}  \label{eq: Sni ub}
S_{i,n}\geq f^2(0)\triangle y_{i,1}+\sum_{j=0}^{N_i}
m_{i,j}^2\triangle y+f^2(M)\triangle y_{i,N_i+3},
\end{equation}
where $M_{i,j}$ and $m_{i,j}$ denote respectively the supremum and
the infimum of $f$ on the partition $[2^{-j_n}(i-2A)+j\triangle y,
2^{-j_n}(i-2A)+(j+1)\triangle y]$. As $n\to\infty$, the mesh of
$P_{i,n}$ tends to zero. Obviously, $f^2(0)\triangle
y_{i,1}+f^2(M)\triangle y_{i,N_i+3}\to 0$. $f\in L_1$ and boundness
of $f$ implies that $f^2$ is Riemann integrable on $[0,M]$ for any
$M>0$. It follows that $S_{i,n}\to\int_0^Mf^2(y)dy$ for $2A\leq
i\leq 6A$ and by $\eqref{eq: I4jn}$,
\begin{equation}
I_4(j_n)\to\int_0^Mf^2(y)dy.
\end{equation}
Note that this convergence is uniform for $x\in [0,1]$ and $u'\in
[-2A,2A+1]$, therefore, it is uniform for $x\in [0,1],u\in
[-2A,2A],z\in [-A,A],w\in [-A,A]$. We have thus proved that
$\lim_{n\to\infty}I_1(j_n)=\int_0^Mf^2(y)dy$ uniformly for $x, u, z,
w$ in the corresponding intervals. By analogy,
$I_3(j_n)\to\int_{-M}^0f^2(y)dy$ uniformly for $x, u, z, w$ in the
same intervals.

Since $f$ is bounded, $I_2(j_n)\to0 \ \ {\rm as} \ \ n\to\infty.$
$\eqref{eq: Ijn}$ is proved when collecting the results for
$I_1(j_n)$, $I_2(j_n)$ and $I_3(j_n)$.
\end{proof}

\begin{lemma}     \label{Lemma K int}
Assume the scaling function $\phi$ satisfies (S1) such that the
kernel K associated with $\phi$ is dominated by $\Phi$ whose support
is contained in $[-A,A]$, where $A$ is an integer. Then
\begin{equation}
\int_{-A}^A\int_{-A}^A\int_{-2A}^{2A}\int_0^1K(x+z,x)K(x+w,x)K(x+z,x-u)K(x+w,x-u)dxdudzdw
=1.
\end{equation}
\end{lemma}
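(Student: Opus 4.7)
\bigskip
\noindent\textbf{Proof plan.}
The plan is to reduce the four-fold integral to a repeated application of the reproducing property of the projection kernel $K$, followed by the orthonormality of the scaling function.

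First, I would perform the change of variables $(z,w,u)\mapsto(a,b,c)$ with $a=x+z$, $b=x+w$, $c=x-u$, for fixed $x\in[0,1]$. The Jacobian has absolute value $1$, so
\begin{equation*}
J:=\int_{-A}^{A}\!\!\int_{-A}^{A}\!\!\int_{-2A}^{2A}\!\!\int_{0}^{1}K(x{+}z,x)K(x{+}w,x)K(x{+}z,x{-}u)K(x{+}w,x{-}u)\,dx\,du\,dz\,dw
\end{equation*}
becomes an integral in $(x,a,b,c)$ over the region $x\in[0,1]$, $a,b\in[x-A,x+A]$, $c\in[x-2A,x+2A]$. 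Since $|K(u,v)|\leq \Phi(u-v)$ with $\Phi$ supported in $[-A,A]$, the integrand vanishes whenever $|a-x|>A$, $|b-x|>A$, or $|c-a|>A$ (equivalently $|c-x|>2A$ after using the $b$-constraint), so extending each of $a,b,c$ to integration over all of $\mathbb{R}$ does not alter the value:
\begin{equation*}
J=\int_{0}^{1}\!\!\int_{\mathbb{R}}\!\!\int_{\mathbb{R}}\!\!\int_{\mathbb{R}}K(a,x)K(b,x)K(a,c)K(b,c)\,dc\,db\,da\,dx.
\end{equation*}

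Next I would invoke the reproducing property of the projection kernel. From the definition $K(x,y)=\sum_{k}\phi(x-k)\phi(y-k)$ together with the orthonormality relation $\int\phi(\cdot-k)\phi(\cdot-j)=\delta_{kj}$, one obtains
\begin{equation*}
\int_{\mathbb{R}}K(a,c)K(b,c)\,dc=K(a,b),\qquad\int_{\mathbb{R}}K(x,a)K(a,b)\,da=K(x,b),
\end{equation*}
along with symmetry $K(x,y)=K(y,x)$. A direct verification of absolute integrability via the $\Phi$ bound justifies Fubini at each step. Applying the first identity in the $c$-integral yields
\begin{equation*}
J=\int_{0}^{1}\!\!\int_{\mathbb{R}}\!\!\int_{\mathbb{R}}K(a,x)K(b,x)K(a,b)\,db\,da\,dx,
\end{equation*}
and applying the second identity in the $a$-integral (using symmetry to read $K(a,x)=K(x,a)$) gives
\begin{equation*}
J=\int_{0}^{1}\!\!\int_{\mathbb{R}}K(x,b)\,K(b,x)\,db\,dx=\int_{0}^{1}\!\!\int_{\mathbb{R}}K(x,b)^{2}\,db\,dx.
\end{equation*}
A third application of the reproducing property in the $b$-integral yields $\int_{\mathbb{R}}K(x,b)^{2}\,db=K(x,x)=\sum_{k}\phi(x-k)^{2}$.

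Finally, the $x$-integral collapses via a tiling argument:
\begin{equation*}
J=\int_{0}^{1}\sum_{k\in\mathbb{Z}}\phi(x-k)^{2}\,dx=\sum_{k\in\mathbb{Z}}\int_{0}^{1}\phi(x-k)^{2}\,dx=\int_{\mathbb{R}}\phi(x)^{2}\,dx=\|\phi\|_{2}^{2}=1,
\end{equation*}
where the last equality uses that $\phi$ is the scaling function of an orthonormal multiresolution analysis. The main obstacle, if any, is essentially bookkeeping: confirming that the truncated domains of the original integral capture the full support imposed by the $\Phi$-majorization, so that extension to $\mathbb{R}^{3}$ is legitimate and Fubini applies at each stage; once that is granted, the three reproducing-kernel collapses are mechanical.
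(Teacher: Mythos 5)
Your proposal is correct and matches the paper's proof essentially step for step: you extend the truncated integrals over $z,w,u$ to all of $\mathbb{R}$ using the compact support of $\Phi$, apply the reproducing identity $\int K(a,c)K(b,c)\,dc = K(a,b)$ three times to collapse the quadruple integral down to $\int_0^1 K(x,x)\,dx$, and conclude via the tiling identity $\int_0^1\sum_k\phi^2(x-k)\,dx=\|\phi\|_2^2=1$. The only cosmetic difference is that you perform an explicit change of variables $(a,b,c)=(x+z,x+w,x-u)$ up front, whereas the paper absorbs this into the Fubini rearrangement and the reproducing-property computations directly in the original variables.
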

\begin{proof}
Since $K(x+z,x)K(x+w,x)K(x+z,x-u)K(x+w,x-u)$ is absolutely
integrable, by Fubini's theorem,
\begin{equation}
\begin{split}
&\quad\int_{-A}^A\int_{-A}^A\int_{-2A}^{2A}\int_0^1K(x+z,x)K(x+w,x)K(x+z,x-u)K(x+w,x-u)dxdudzdw\\
&=\int_0^1\int_\rr\int_\rr K(x+z,x)K(x+z,x-u)dz\int_\rr
K(x+w,x)K(x+w,x-u)dwdudx.
\end{split}
\end{equation}
We make the following observation: For any $y$ and $z$, by
orthogonality of $\phi$,
\begin{align}   \label{eq: intK}
&\int K(x,y)K(x,z)dx    \notag\\
=&\int\sum_k\phi^2(x-k)\phi(y-k)\phi(z-k)dx+\int\sum_{k\neq
l}\phi(x-k)\phi(y-k)\phi(x-l)\phi(z-l)dx  \notag\\
=&\sum_{k\in\zz}\phi(y-k)\phi(z-k)\int\phi^2(x-k)dx+\sum_{k\neq
l}\phi(y-k)\phi(z-l)\int\phi(x-k)\phi(x-l)dx \notag\\
=&\sum_{k\in\zz}\phi(y-k)\phi(z-k)=K(y,z).
\end{align}
For fixed $x\in[0,1]$, by repeated applications of the above
equation,
\begin{equation}
\begin{split}
&\quad\int_\rr\int_\rr K(x+z,x)K(x+z,x-u)dz\int_\rr
K(x+w,x)K(x+w,x-u)dwdu\\
&=K(x,x)=\sum_{k\in\zz}\phi^2(x-k).
\end{split}
\end{equation}
Finally we consider
\begin{equation}
%\begin{split}
\int_0^1\sum_{k\in\zz}\phi^2(x-k)dx
=\sum_{k\in\zz}\int_0^1\phi^2(x-k)dx=\int\phi^2(x)dx=1.
%\end{split}
\end{equation}
\end{proof}

We now continue with the proof of Lemma $\ref{Lemma 2.4}$. Since in
Lemma $\ref{new lemma}$, the convergence is uniform for $x\in
[0,1],u\in [-2A,2A],z\in [-A,A],w\in [-A,A]$, then if $n$ is
sufficiently large, for fixed $M>0$,
\begin{equation}      \label{eq: Ijn bd}
|I(j_n)|\leq2\int_{-M}^Mf^2(t)dt.
\end{equation}
The quantity in $\eqref{eq: 4 int}$ is bounded in absolute value by
\begin{equation}
\begin{split}           \label{eq: fubini}
&\quad\|\Phi\|_\infty^4\int_{-A}^A\int_{-A}^A\int_{-2A}^{2A}\int_0^1I(j_n)dxdudzdw\\
&\leq2\|\Phi\|_\infty^4\int_{-A}^A\int_{-A}^A\int_{-2A}^{2A}\int_0^1\int_{-M}^Mf^2(t)dtdxdudzdw
<\infty
\end{split}
\end{equation}
for $n$ large. So, by Fubini, $\eqref{eq: 4 int}$ is equal to
\begin{equation}
\int_{-A}^A\int_{-A}^A\int_{-2A}^{2A}\int_0^1K(x+z,x)K(x+w,x)K(x+z,x-u)K(x+w,x-u)I(j_n)dxdudzdw.
\end{equation}
By dominated convergence and Lemmas $\ref{new lemma}$, $\ref{Lemma K
int}$, it converges to $\int_{-M}^M f^2(y)dy$.
\end{proof}

\begin{proof}[Proof of Lemma $\ref{Lemma Cn2}$]
Choosing $M$ to be an integer such that $M\geq L+2^{-j_n}(4A+1)$, we
divide the plane $\rr^2$ into four regions: $[-M,M]^2,
[-M,M]^C\times[-M,M]^C, [-M,M]\times[-M,M]^C$ and
$[-M,M]^C\times[-M,M]$. To get the rate at which
$2^{j_n}\int_{[-M,M]^2}C_n^2(s,t)dsdt$ tends to $\int_{-M}^M
f^2(y)dy$, we estimate $\left|I(j_n)-\int_{-M}^M f^2(y)dy\right|$.

$I_1(j_n)$, which was defined in $\eqref{eq: Ijn def}$, can be
decomposed into 4 terms as follows.
\begin{equation}
\begin{aligned}
I_1(j_n)&=\left(\sum_{i=2A}^{\lfloor2^{j_n}L\rfloor-2A-1}+\sum_{\lfloor2^{j_n}L\rfloor-2A}
^{\lceil2^{j_n}L\rceil+2A-1}+\sum_{i=\lceil2^{j_n}L\rceil+2A}^{2^{j_n}M-2A-1}
+\sum_{i=2^{j_n}M-2A}^{2^{j_n}M+A}\right)2^{-j_n}f(2^{-j_n}(x+i))\\
&\quad f(2^{-j_n}(u'+i))1(2^{-j_n}(z'+i)\in [0,M])1(2^{-j_n}(w'+i)\in [0,M])\\
&=: I_4'(j_n)+I_5'(j_n)+I_6'(j_n)+I_7'(j_n).
\end{aligned}
\end{equation}
$I_4'(j_n)$ is essentially the same as $I_4(j_n)$ in $\eqref{eq:
I4jn}$. We follow the argument from $\eqref{eq: I4jn}$ to
$\eqref{eq: Sni ub}$ but consider the interval $[0,L]$ instead. Due
to the hypothesis of H\"{o}lder continuity, there exists $C$
depending on $f$ and $\{j_n\}$, such that
\begin{equation}
|M_{ij}^2-m_{ij}^2|\leq|M_{ij}+m_{ij}||M_{ij}-m_{ij}|\leq
C(\triangle y)^\alpha\leq Cn^{-\delta\alpha}.
\end{equation}
So we obtain
\begin{equation}
\left|S_{i,n}-\int_{0}^Lf^2(y)dy\right|\leq CLn^{-\delta\alpha}.
\end{equation}
Obviously, $f^2(0)\triangle y_{i,1}$ and $f^2(L)\triangle
y_{i,N_i+3}$ are both bounded by $Cn^{-\delta}$. From $\eqref{eq:
I4jn}$, for all $x\in[0,1]$, $u'\in[-2A,2A+1]$, $z'\in[-A,A+1]$,
$w'\in[-A,A+1]$,
\begin{equation}
\begin{aligned}
\left|I_4'(j_n)-\int_{0}^Lf^2(y)dy\right|
&\leq\frac{1}{4A+1}\sum_{i=2A}^{6A}\left|S_{i,n}-f^2(0)\triangle
y_{i,1}-f^2(L)\triangle y_{i,N_i+3}-\int_{0}^Lf^2(y)dy\right|\\
&\leq Cn^{-\delta\alpha}
\end{aligned}
\end{equation}
for $n$ large enough depending on $\{j_n\}$. $C$ depends on $f$ and
$\{j_n\}$.

Next we will look at $I_6'(j_n)$ and consider a partition $P_{i,n}$
on $[L,M]$. Let $\xi_{ij}:=2^{-j_n}(x+i)+j\triangle y$,
$\xi_{ij}'=2^{-j_n}(u'+i)+j\triangle y$. Similar to $\eqref{eq:
I4jn}$, but for a different $N_i$, we write,
\begin{equation}
\begin{aligned}
I_6'(j_n)&=\frac{1}{4A+1}\sum_{i=\lceil2^{j_n}L\rceil+2A}^{\lceil2^{j_n}L\rceil+6A}\sum_{j=0}^{N_i}\triangle
yf(\xi_{ij})f(\xi_{ij}').
\end{aligned}
\end{equation}
Since $f$ is bounded and monotonically decreasing on $[L,\infty)$,
it follows that
\begin{equation}
\int_{L+\triangle y_{i,1}+\triangle y}^{M-\triangle
y_{i,N_i+3}}f^2(y)dy\leq\sum_{j=0}^{N_i}\triangle
yf(\xi_{ij})f(\xi_{ij}')\leq C\triangle y+\int_{L+\triangle
y_{i,1}}^{M-\triangle y_{i,N_i+3}-\triangle y}f^2(y)dy.
\end{equation}
Thus when $M\geq L+2^{-j_n}(4A+1)$, for all $x\in[0,1]$,
$u'\in[-2A,2A+1]$, $z'\in[-A,A+1]$, $w'\in[-A,A+1]$ and $n$ large
enough depending on $\{j_n\}$,
\begin{equation}
\left|I_6'(j_n)-\int_L^Mf^2(y)dy\right|\leq Cn^{-\delta},
\end{equation}
where $C$ depends on $f$ and $\{j_n\}$. We also have
$|I_5'(j_n)|\leq Cn^{-\delta}$ and $|I_7'(j_n)|\leq Cn^{-\delta}$.
Collecting these bounds,
\begin{equation}
\left|I_1(j_n)-\int_0^M f^2(y)dy\right|\leq
C(n^{-\delta\alpha}+n^{-\delta})\leq Cn^{-\delta\alpha}.
\end{equation}
Now it's easy to see $\left|I(j_n)-\int_{-M}^M f^2(y)dy\right|\leq
Cn^{-\delta\alpha}$. By $\eqref{eq: 4 int}$, $\eqref{eq: Ijn def}$
and Lemma $\ref{Lemma K int}$, we get
\begin{equation}   \label{eq: M2}
\left|2^{j_n}\int_{[-M,M]^2}C_n^2(s,t)dsdt-\int_{-M}^M
f^2(y)dy\right|\leq Cn^{-\delta\alpha}.
\end{equation}
The derivation of a bound on
$\left|2^{j_n}\int_{[-M,M]^C}\int_{[-M,M]^C}C_n^2(s,t)dsdt-\int_{[-M,M]^C}f^2(y)dy\right|$
is similar. The analysis of the key component is analogous to
$I_6'(j_n)$, where the monotonicity of the tail of $f$ is used.
\begin{equation}
\left|2^{j_n}\int_{[-M,M]^C}\int_{[-M,M]^C}C_n^2(s,t)dsdt-\int_{[-M,M]^C}f^2(y)dy\right|\leq
Cn^{-\delta}.
\end{equation}
It's easier to analyze the integral on the regions
$[-M,M]\times[-M,M]^C$ and $[-M,M]^C\times[-M,M]$. Both are bounded
by $Cn^{-\delta}$ since there are at most finitely many summands
that are not zero.  $\eqref{eq: Cn2 bd}$ follows by collecting the
bounds on the four regions and taking $C$ sufficiently large so that
it is true for all $n$.

\end{proof}

\section*{Acknowledgement}
I would like to express my sincere gratitude to my advisor Prof.
Evarist Gin\'{e} for his constant support during the dissertation. I
appreciate his patience, numerous hours of insightful discussions
and careful proofreading of the manuscript. It would have been
almost impossible for me to write this article without his help.

\bibliographystyle{model2-names}

\end{document}